
\documentclass{amsart}
\usepackage{amssymb, amsmath, amscd}
\usepackage[all]{xy}
\swapnumbers


\renewcommand{\a}{\alpha}
\renewcommand{\b}{\beta}

\renewcommand{\d}{\delta}

\providecommand{\s}{\sigma}

\providecommand{\om}{\omega}
\providecommand{\Om}{\Omega}

\newcommand{\R}{\mathbb R}

\newcommand{\Z}{\mathbb Z}

\newcommand{\calA}{\mathcal{A}}
\newcommand{\calB}{\mathcal{B}}
\newcommand{\calC}{\mathcal{C}}
\newcommand{\calG}{\mathcal{G}}
\newcommand{\calF}{\mathcal{F}}
\newcommand{\calH}{\mathcal{H}}

\newcommand{\calN}{\mathcal{N}}

\newcommand{\calU}{\mathcal{U}}

\newcommand{\calZ}{\mathcal{Z}}

\newcommand{\conj}{\overline}

\newcommand{\order}{\operatorname{or}}

\newcommand{\coker}{\operatorname{coker}}
\newcommand{\im}{\operatorname{im}}

\newcommand{\End}{\operatorname{End}}
\newcommand{\Pf}{\operatorname{Pf}}

\newcommand{\lk}{\operatorname{lk}}
\newcommand{\IE}{\operatorname{(I/E)}}

\numberwithin{equation}{section}

\theoremstyle{plain}
\newtheorem{thm}[equation]{Theorem}

\newtheorem{cor}[equation]{Corollary}
\newtheorem{prop}[equation]{Proposition}

\theoremstyle{remark}
\newtheorem{remark}[equation]{Remark}
\newtheorem{ex}[equation]{Example}

\theoremstyle{definition}
\newtheorem{defn}[equation]{Definition}


\begin{document}

\title{Subdivisions and Transgressive Chains}
\author{Jer-Chin (Luke) Chuang}
\thanks{The author thanks Robin Forman for many helpful discussions.} 

\begin{abstract}
Combinatorial transgressions are secondary invariants of a space admitting triangulations.  They arise from subdivisions and are analogous to transgressive forms such as those arising in Chern-Weil theory.  Unlike combinatorial characteristic classes, combinatorial transgressions have not been previously studied.  First, this article characterizes transgressions that are path-independent of subdivision sequence.  The result is obtained by using a cohomology on posets that is shown to be equivalent to higher derived functors of the inverse (or projective) limit over the opposite poset.  Second, a canonical local formula is demonstrated for a particular combinatorial transgression: namely, that relative the difference of Poincar\'{e} duals to the Euler class.
\end{abstract}

\maketitle

\section{Introduction}

In recent years there has been renewed interest in combinatorial methods for differential geometry and topology as motivated by computational considerations.  Central is the concept of a triangulation which provides a combinatorial representation amenable to computational methods.  Applications of triangulations are diverse, ranging from computer-aided design and mesh-generation for numerical methods to geographic information systems, computer graphics, and the visualization and geometrization of large data sets (see for example the books \cite{hjelle}, \cite{edelsbrunner}, \cite{zomorodian} and references therein).  A typical scenario is as follows: one samples a complicated object, such as geographic terrain, and hopes to reconstruct some computationally amenable model from the resulting data set, often a triangulation.  If particular regions of the terrain are further sampled, the resulting additional data can be realized (under appropriate triangulation methods) as a subdivision of the original model.  

The archetype for relationships between geometry and topology is the Gauss-Bonnet theorem for surfaces.  For triangulated surfaces, the combinatorial analogue replaces the integral of Gaussian curvature over the surface with a sum over the vertices of a likewise local quantity, the \emph{angle defect}.  The angle defect at a vertex is $2\pi$ minus the sum of the interior angles at the vertex.  This notion accords with the usual intuitive picture of flat, positively, or negatively-curved surfaces.  In the 1940s, Chern\cite{chern1} reinterpreted the Gauss-Bonnet theorem as the computation of the characteristic number associated to the Euler class, a characteristic class.  Characteristic classes are cohomology classes in the base space of a vector bundle and are helpful in distinguishing isomorphism classes of vector bundles.  One may then ask for combinatorial formulae for the homology duals of characteristic classes.  The existence of local formulae was shown by Levitt and Rourke\cite{levitt}, and many have pursued this still open question: Gabrielov, Gelfand, and Losik\cite{gabrielov}, Cheeger\cite{cheeger}, Gelfand and MacPherson\cite{gelfand}, and more recently, Gaifullin\cite{gaifullin}.

One particular approach to characteristic classes, namely Chern-Weil theory, suggests that when primary characteristic classes coincide, there is a finer invariant called a \emph{transgression}.  Though combinatorial formulae for primary characteristic classes have been much studied, combinatorial transgressions have not been previously examined and are the subject of this paper.

We briefly recall the Chern-Weil theory (see Section \ref{diffgeo} for details): Given a vector bundle over a smooth manifold, Chern-Weil theory provides a local expression for forms representing characteristic classes of the bundle.  The construction is effected via a map from the invariant polynomials on the general linear group to the cohomology ring of the base.  This method involves the choice of a connection, but for a fixed invariant polynomial, the local expressions obtained from two connections differ by an exact form.  A form whose exterior derivative is such a difference is said to be \emph{transgressive}.  Chern-Weil theory also provides a formula for a natural choice of transgressive form relative two given connections.  This is effected by a canonical deformation in the affine space of connections.  A particular case was studied by Chern and Simons\cite{chern2} and is now called a \emph{Chern-Simons form}.  It is a secondary invariant dependent on both the bundle and a choice of connection and in particular cases defines cohomology classes called \emph{secondary characteristic classes}.  These forms are important in the study of flat bundles and also in theoretical physics.

In the combinatorial setting, the transgression is a chain obtained from a simplicial complex and a subdivision of it.  The transgression provides a subdivision-dependent secondary invariant for spaces admitting triangulations and leads to interesting questions regarding the poset of triangulations and its homology with respect to certain systems of vector spaces.  This paper characterizes cases when the transgression is independent of subdivision sequences between the two complexes and shows that there is a canonical local formula for a particular transgression, namely that relative to certain $0$-cycles called \emph{Euler cycles}.  Both of these properties were motivated by results from the Chern-Weil context.  (See Appendix for details.)

Formally, the combinatorial transgression may be defined as follows: Let $M$ be a space admitting triangulations, and fix a homology class $h\in H_k(M)$.  For each triangulation we may choose a representative for the given homology class.  Suppose $Y$ is a triangulation and $i\colon Y\to X$ a subdivision with representative cycles $\b,\a$ on $C_k(Y),C_k(X)$ respectively.  Then, the chain $\a-i_*\b$ is a boundary, and any $(k+1)$-chain whose boundary is such is called a \emph{transgression} relative to the representatives $\a,\b$ under the subdivision $i\colon Y\to X$.  Note that transgressions are defined relative to a manner of choosing representatives and are determined up to cycles.

Transgressions are thus secondary invariants that reflect the difference between complexes related by subdivision.  We say a transgression is \emph{path-independent} provided for a sequence of subdivisions $X_1\stackrel{i}{\to} X_2\stackrel{j}{\to} X_3$ the transgression associated to $i,j$ add to yield that for $j\circ i$.  In particular, path-independent transgressions may be computed step-wise with respect to any subdivision sequence connecting the initial and final complexes.  If we view transgressions as chain-valued $1$-cochains on the order complex of the poset of triangulations ordered by subdivision, then path-independence may be rephrased as requiring the transgression actually to be a $1$-cocycle.  Section \ref{order} develops a cohomology (called \emph{order cohomology}) on posets that formalizes on this observation.  This cohomology is shown to be equivalent to the derived functors of the inverse (or projective) limit with respect to the opposite underlying poset.  (For more on higher derived functors of the inverse limit, see Roos\cite{roos}, Jensen\cite{jensen}, or Weibel\cite{weibel}.)  We will show near the end of this paper that for sufficiently nice posets like lattices, order cohomology is isomorphic to a particular sheaf cohomology.  Regardless, using order cohomology, we obtain the following:

\begin{thm}\label{pathindependence}
Consider a poset of triangulations for a fixed space directed by subdivision.  Suppose there is no simplicial cycle common to every triangulation in the poset.  Then, for any cycle-representative assignment, there is a compatible path-independent transgression iff all the cycle-representatives are actually boundaries, that is, the trivial homology element is being represented.
\end{thm}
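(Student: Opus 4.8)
The plan is to recast the entire question as a computation in the order cohomology of Section~\ref{order}, applied to the short exact sequences of coefficient systems on the poset coming from the chain complexes of the triangulations. Write $P$ for the poset, and for each triangulation $X$ regard $C_k(X),Z_k(X),B_k(X)$ as the values of covariant systems $C_k,Z_k,B_k$ on $P$, the structure maps being the subdivision push-forwards $i_*$. Because subdivision is a homology isomorphism fixing the chosen class, the homology system $H_k$ is the constant system with value $H_k(M)$, and the fixed class $h$ is a global section, i.e. an element of $H^0(H_k)$. A cycle-representative assignment is then exactly a $0$-cochain $\alpha\in C^0(Z_k)$ lying over $h$, and its coboundary $d=\delta\alpha$, given by $d(i)=\alpha_X-i_*\alpha_Y$ for $i\colon Y\to X$, is a $1$-cochain valued in $B_k$. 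A transgression is a $1$-cochain $T\in C^1(C_{k+1})$ with $\partial T=d$, and a direct check shows that path-independence is precisely the cocycle condition $\delta T=0$.

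In this language the easy direction is immediate. If the representatives are boundaries, choose primitives $c_X\in C_{k+1}(X)$ with $\partial c_X=\alpha_X$; then $T:=\delta c$, for the resulting $0$-cochain $c\in C^0(C_{k+1})$, satisfies $\partial T=\delta\partial c=\delta\alpha=d$ and $\delta T=\delta\delta c=0$, so it is a path-independent transgression. For the converse I would extract the obstruction to solving $\partial T=d,\ \delta T=0$. Lifting $d$ through the surjection $\partial\colon C_{k+1}\to B_k$ is always possible cochainwise, and the residual $\delta T$ then satisfies $\partial(\delta T)=\delta d=0$, so it lands in the cycle system $Z_{k+1}$ and represents a well-defined class, namely the image of $[d]$ under the connecting map $\beta\colon H^1(B_k)\to H^2(Z_{k+1})$ of $0\to Z_{k+1}\to C_{k+1}\to B_k\to 0$. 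A path-independent transgression exists if and only if this class vanishes. Moreover $[d]$ is itself the image of $h$ under the connecting map $\beta'\colon H^0(H_k)\to H^1(B_k)$ of $0\to B_k\to Z_k\to H_k\to 0$, so existence is equivalent to $\beta\beta'(h)=0$, the value at $h$ of the iterated connecting homomorphism (the $d_2$-differential of the two-term system $[C_{k+1}\xrightarrow{\partial}Z_k]$). In particular the obstruction depends only on $h$, which matches the ``for any assignment'' in the statement.

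It remains to run the converse using the hypothesis. The no-common-cycle assumption says precisely that there is no nonzero compatible family of simplicial $k$-cycles, i.e. $H^0(Z_k)=\varprojlim Z_k=0$; feeding this into the long exact sequence of $0\to B_k\to Z_k\to H_k\to 0$ shows that $\beta'$ is injective, so that $\beta'(h)=0$ already forces $h=0$. The crux is therefore to show that $\beta$ is injective on the image of $\beta'$, equivalently that the first connecting map already captures the full obstruction. I expect to obtain this by controlling the order cohomology of the free chain system: since $C_{k+1}$ is free on the $(k+1)$-simplices and subdivision only adjoins generators, I aim to prove that $C_{k+1}$ is acyclic for order cohomology in positive degrees, whence $H^1(C_{k+1})=0$, $\ker\beta=\im\bigl(\partial_*\colon H^1(C_{k+1})\to H^1(B_k)\bigr)=0$, and $\beta$ is injective. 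Combining, $\beta\beta'(h)=0$ yields $\beta'(h)=0$ and hence $h=0$, as desired.

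The main obstacle is exactly this acyclicity input: unlike the easy direction it is not formal, and over a poset of low cohomological dimension one can rig abstract systems with $\varprojlim Z_k=0$ for which the iterated connecting map fails to be injective, so the geometry of subdivision must be used. The plan is to exploit flasque-type surjectivity of the chain systems—every chain over a coarse triangulation extends, through any chosen refinement, over a finer one—which is the standard criterion for vanishing of the higher derived limits. Should full acyclicity prove delicate, the fallback is to argue directly that a path-independent transgression for a nonzero class would let one splice the coherence cocycle $T$ together with the representatives into a globally compatible family of cycles over $P$, that is, a common cycle, contradicting the hypothesis; this is the backward reading of the injectivity of $\beta\beta'$, and is where essentially all the content of the theorem resides.
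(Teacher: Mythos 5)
Your reformulation is exactly the paper's: Section \ref{pathind} works in the same double complex $C^{k,i}=C^k(\order(P);\calC_i)$, path-independence is the cocycle condition $\delta\varphi=0$, and the paper's easy direction is your argument verbatim ($\partial$-lift the boundary-valued assignment $a$ to $\tau\in C^{0,i+1}$ and take $\varphi=\delta\tau$). Your identification of the obstruction as the iterated connecting map $\beta\beta'(h)$ of the two short exact sequences $0\to\calZ_{i+1}\to\calC_{i+1}\to\calB_i\to0$ and $0\to\calB_i\to\calZ_i\to\calH_i\to0$ is also correct, as is the deduction that the no-common-cycle hypothesis ($\ker\delta^0_Z=0$, i.e.\ $H^0(X;\calZ^i)=0$) makes $\beta'$ injective.

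The converse, however, is not proved: everything hinges on $\beta$ being injective on $\im\beta'$, and for this you offer only the hoped-for vanishing $H^1(X;\calC_{i+1})=0$ plus a ``splicing'' fallback, neither of which is established. The flasqueness heuristic does not apply here: the structure maps of the chain system are the \emph{injective} (not surjective) subdivision push-forwards, and this is not a Mittag-Leffler situation. Indeed, for the wedge poset $a,b\preceq c$ of Example \ref{wedge} with $c$ a common subdivision of $a$ and $b$, one computes $H^1\cong C_*(c)/(\im\Phi_{ac}+\im\Phi_{bc})$, which is nonzero already for two triangulations of $S^1$ with a common refinement; so the acyclicity you want cannot be a formal consequence of the shape of the system and would require a genuinely geometric argument about the full triangulation poset, which you do not supply. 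Your fallback is no better off, since extracting a common cycle from $a$ and $T$ requires precisely a $\delta$-primitive $\tau$ of $T$, i.e.\ the same acyclicity. For comparison, the paper's proof at this exact point asserts that $\partial_*[\varphi]=0$ in $H^1(X;\calB^i)$, so that $[\varphi]=j_*[\psi]$ and $\varphi$ may be replaced by a coboundary $\delta\tau$; since $\partial_*[\varphi]=[\delta_Z a]=\beta'(h)$, that assertion is exactly the statement your route is trying to extract from $\beta\beta'(h)=0$. So you have correctly located the load-bearing step --- the one the paper dispatches in a single line --- but your proposal does not close it.
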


For example, suppose for each triangulation the chain spaces are endowed with an inner product.  Then, Hodge theory provides unique cycle representatives for each homology class.  For spaces satisfying the hypothesis in the above theorem, transgressions relative to these harmonic cycles are not path-independent whenever a non-zero harmonic cycle is assigned.  An example where there is a simplicial cycle common to every triangulation in the poset is the subdivision poset of a fixed simplicial complex.  However, in this case we can always find a path-independent transgression:

\begin{cor}
For a fixed triangulation, the poset of its subdivisions admits a path-independent transgression (possibly) trivial.  Using the canonical inner product on chain spaces of the subdivisions yields an unique path-independent transgression.
\end{cor}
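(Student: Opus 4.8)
The plan is to exploit the one feature that sets this poset apart from those in Theorem~\ref{pathindependence}: the fixed triangulation $X_0$ is a minimum element. Write $P$ for the poset of subdivisions of $X_0$; then each $X\in P$ carries a unique subdivision map $i_X\colon X_0\to X$, and functoriality of subdivision gives $i\circ i_Y=i_X$, hence $i_*\circ (i_Y)_*=(i_X)_*$, for every edge $i\colon Y\to X$. First I would dispatch existence by transporting a single cycle upward. Fixing $\alpha_0\in C_k(X_0)$ representing $h$ and setting $\alpha_X:=(i_X)_*\alpha_0$, each $\alpha_X$ again represents $h$, and compatibility yields $\alpha_X-i_*\alpha_Y=(i_X)_*\alpha_0-i_*(i_Y)_*\alpha_0=0$ on every edge. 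The zero cochain is then already a transgression and is vacuously a cocycle; this is the advertised path-independent transgression, and it explains the parenthetical triviality, since for subdivision-compatible representatives the secondary invariant collapses. The same vanishing persists for an arbitrary representative assignment: by the equivalence of Section~\ref{order}, order cohomology is computed by the derived inverse limit, which—since $X_0$ is initial in $P$—is evaluation at $X_0$ and hence exact, giving $H^{\ge1}_{\mathrm{order}}(P;\calF)=0$ for $\calF\colon X\mapsto C_{k+1}(X)$ and for $X\mapsto Z_{k+1}(X)$; so the degree-two obstruction to promoting an edgewise transgression to a cocycle vanishes in every case.

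For the uniqueness claim I would first parametrize all path-independent transgressions for a fixed assignment $\{\alpha_X\}$. The minimum element reduces the cocycle condition to the spine $\{i_X\}$: for any chains $t_X\in C_{k+1}(X)$ with $\partial t_X=\alpha_X-(i_X)_*\alpha_0$, the formula $T(i\colon Y\to X):=t_X-i_*t_Y$ defines a $1$-cochain whose edgewise boundary is exactly $\alpha_X-i_*\alpha_Y$, and a short computation using $i\circ i_Y=i_X$ shows it is automatically a cocycle; conversely every cocycle is recovered from its spine values $t_X=T(i_X)$. Thus path-independent transgressions correspond bijectively to the product $\prod_X\{\,t_X:\partial t_X=\alpha_X-(i_X)_*\alpha_0\,\}$ of affine fibres—each nonempty, as $\alpha_X$ and $(i_X)_*\alpha_0$ are homologous—one per subdivision and entirely uncoupled across distinct $X$. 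This is the concrete face of $H^1_{\mathrm{order}}(P;\calF)=0$: the torsor of cocycles collapses onto free spine choices.

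Now I would bring in the canonical inner product, for which the simplices of each $X$ form an orthonormal basis. Hodge theory fixes the assignment by selecting the harmonic representative $\alpha_X$ in each $C_k(X)$, and within each affine fibre it singles out the unique minimizer, the chain $t_X$ with $\partial t_X=\alpha_X-(i_X)_*\alpha_0$ lying in $(Z_{k+1}(X))^{\perp}=\im\partial^{\ast}$. Because the fibres are uncoupled, these fibrewise minimal-norm solutions assemble through the spine formula into a single well-defined cocycle, and it is the unique path-independent transgression of least norm. This is the canonical transgression attached to the inner product.

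The step I expect to be most delicate is exactly this decoupling. Edgewise minimal-norm solutions do not cohere into a cocycle in general, because the pushforwards $i_*$ need not carry $(Z_{k+1}(Y))^{\perp}$ into $(Z_{k+1}(X))^{\perp}$, so a naive ``harmonic on every edge'' prescription is inconsistent. The remedy is to minimize only along the spine and let the cocycle formula propagate the choice; checking that this propagation is both consistent and genuinely norm-minimizing is where the minimum-element hypothesis does its real work, and is precisely what is unavailable in the setting of Theorem~\ref{pathindependence}.
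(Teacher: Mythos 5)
Your argument is correct and follows essentially the same route as the paper: the unique minimal element reduces every path-independent transgression to a choice of $\partial$-lift over each subdivision (your spine data $\{t_X\}$ with $T(Y\to X)=t_X-i_*t_Y$ is exactly the paper's $0$-cochain $\tau$ with $\varphi=\delta\tau$ and $\partial\tau=a-\Phi_*z$, and your acyclicity appeal is the paper's inverted-cone proposition), after which the inner product selects the norm-minimizing lift in each fibre. The one caveat is your closing claim that the resulting cocycle is the path-independent transgression of \emph{least norm} among all of them --- that global minimality is neither verified nor needed, since canonicity of the construction already yields the corollary's ``unique'' transgression, exactly as in the paper.
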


As mentioned previously, Chern-Weil theory not only provides local formulae for characteristic forms, but also for a canonical transgression (see the Appendix).  The second main result is that there is an analogous canonical combinatorial transgression relative certain $0$-cycles called \emph{Euler cycles}.  On combinatorial manifolds these cycles are homology duals to the Euler class.  The notion of locality used is that of a \emph{party}:  Let $i\colon N\to M$ be a subdivision of simplicial complexes.  A \emph{$k$-party} of $M$ is a union of $k$-simplices of $M$ which coincides with the image of a $k$-simplex of $N$ under the map $i$.
\begin{thm}\label{localformula}
There is a canonical formula (with rational coefficients) for the transgression relative to the Euler cycles.  This formula is local in the sense that the coefficient for any edge in the transgressive chain is determined by the union of top-dimensional parties containing that edge.
\end{thm}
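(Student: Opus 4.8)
The plan is to exhibit the transgression as a sum of canonical fillings, one for each top-dimensional party, and then to read off the locality statement from the fact that each filling is supported on the edges of a single party. Write $D := \alpha - i_*\beta = E_M - i_* E_N$ for the difference of the Euler $0$-cycles on the fine complex $M$ and the pushed-forward coarse complex $N$. Since $E_M$ and $i_*E_N$ are homologous $0$-cycles representing the same class, $D$ is a boundary, and a transgression is any $1$-chain $T$ on $M$ with $\partial T = D$, determined up to $1$-cycles. I would pin down the canonical $T$ by requiring it to be orthogonal to the $1$-cycles $Z_1(M)$ in the standard inner product on $C_1(M)$ — equivalently the least-norm (Hodge) primitive $T = \partial^{*}(\partial\partial^{*})^{-1}D$ on $\im\partial$ — which is the same inner-product normalization already used in the Corollary.

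First I would set up the party decomposition. The top-dimensional parties $P_\sigma = i(\sigma)$, indexed by the $n$-simplices $\sigma$ of $N$, cover $M$ by contractible subcomplexes that meet precisely along lower-dimensional parties, giving a poset of parties ordered by inclusion of images. Each $P_\sigma$ is the subdivision, internal to $M$, of a single coarse simplex, and every edge of $M$ lies in the union of exactly those $P_\sigma$ that contain it. This is the structure I want the final coefficient to respect.

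The heart of the argument is to localize $D$ coherently over this poset: I want to write $D = \sum_\sigma D_\sigma$, where each $D_\sigma$ is a $0$-chain supported on $P_\sigma$ with total mass zero, so that it bounds within the contractible party $P_\sigma$. The push-forward $i_* E_N$ places its mass only at the images of the vertices of $N$, which are shared among all parties meeting that vertex, whereas $E_M$ distributes mass over all vertices of $M$; to split both consistently I would apportion the weight at each shared vertex among the incident parties by a fixed rule and then resolve the overcounting by inclusion–exclusion (Möbius inversion) over the party poset. I expect this step to be the main obstacle, because the apportionment must simultaneously (i) keep each $D_\sigma$ supported in $P_\sigma$, (ii) force each $D_\sigma$ to have total mass zero, and (iii) reassemble to $D$. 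Point (ii) is a local Gauss–Bonnet statement: after apportionment, the fine Euler weights inside the subdivision of one coarse simplex must sum to the coarse weight that simplex replaces. I would prove this by reducing, through the Möbius computation, to the contractibility of each party together with a local Euler-characteristic count, so that the global cancellation (the total masses sum to $\chi(M)-\chi(N)=0$) is refined to a per-party cancellation.

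Once the localization is in place the remainder is routine. For each $\sigma$ the mass-zero chain $D_\sigma$ bounds in the contractible complex $P_\sigma$, and I take the canonical least-norm primitive $T_\sigma \in C_1(P_\sigma)$ with $\partial T_\sigma = D_\sigma$; this is determined entirely by the combinatorial data internal to $P_\sigma$. Setting $T = \sum_\sigma T_\sigma$ gives $\partial T = \sum_\sigma D_\sigma = D$, so $T$ is a transgression, and it is canonical because $E_M$, $E_N$, the apportionment rule, and the least-norm filling are each canonical. Finally, the coefficient of an edge $e$ of $M$ in $T$ equals $\sum_{\sigma:\, e\subseteq P_\sigma} [T_\sigma]_e$, a sum over exactly the top-dimensional parties containing $e$, with each summand depending only on its party; hence that coefficient is determined by the union of top-dimensional parties containing $e$, which is precisely the asserted locality.
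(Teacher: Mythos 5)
Your overall architecture --- split the difference $D=e(X)-i_*e(Y)$ into per-party pieces of total mass zero, fill each piece canonically inside its party, and sum --- is the same as the paper's. But the step you yourself flag as the main obstacle, namely producing a decomposition $D=\sum_\sigma D_\sigma$ with each $D_\sigma$ supported on a party and of total mass zero, is precisely the content of the proof, and you have not supplied it: the ``apportionment at shared vertices plus M\"obius inversion'' is described but never executed, and it is not evident that any vertex-by-vertex apportionment rule satisfies your conditions (i)--(iii) simultaneously. The paper's resolution makes the apportionment unnecessary: because the Euler cycle divides unit mass equally among the vertices of each simplex, it satisfies inclusion--exclusion \emph{as an identity of $0$-chains}, $e(A\cup B)=e(A)+e(B)-e(A\cap B)$. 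Writing $Y=\cup P_i$ as the union of its maximal faces and letting $\calN_k^j$ run over the $(j+1)$-fold intersections, one gets the exact chain identity
\begin{equation*}
e(X)-i_*e(Y)=\sum_{j,k}(-1)^j\bigl(e(Q_k^j)-i_*e(\calN_k^j)\bigr),
\end{equation*}
where $Q_k^j=i_*\calN_k^j$ is a party; each summand has total mass $\chi(Q_k^j)-\chi(\calN_k^j)=1-1=0$ and is supported on the connected complex $Q_k^j$, hence bounds there. Note this is a signed sum over parties of \emph{all} dimensions, not only top-dimensional ones; your insistence on indexing by top-dimensional parties alone is exactly what creates the apportionment problem. (One can regroup the lower-dimensional terms into the top-dimensional parties containing them afterwards, but that is bookkeeping, not the engine of the argument.) The stated locality in terms of top-dimensional parties still follows, since every party containing a given edge lies inside some top-dimensional party containing that edge.

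Two smaller points. Your opening normalization --- the global least-norm primitive orthogonal to $Z_1(M)$ --- is not the chain you end up constructing ($\sum_\sigma T_\sigma$ with locally least-norm $T_\sigma$ is in general a different $1$-chain), and the global Hodge primitive is not local in the required sense, so that paragraph should be deleted rather than reconciled with the rest. Also, the paper obtains canonicity of the local fillings by averaging spanning-tree lifts over all spanning trees of each party's vertex set, relegating the inner-product lift to a remark; either normalization works once the per-party decomposition is actually in hand.
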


The sections are organized as follows: Section \ref{order} introduces a convenient homological setting for examining transgressions, called \emph{order cohomology}, and explains its relation to derived functors.  Section \ref{pathind} recasts the combinatorial transgression in this setting to deduce the above results on path-independent transgressions, and Section \ref{local} demonstrates a local formula for a transgression relative Euler cycles.  Finally, Section \ref{sheaf} explores the connections between order cohomology and sheaf theory.  An appendix provides the motivations from differential geometry.

\begin{remark}
The results of this article were part of the author's Ph.D. dissertation.  However, the connections between order cohomology and derived functors were only subsequently realized.  The exposition via poset cohomology has been retained here for concreteness.
\end{remark}

\section{Order Cohomology}\label{order}

This section introduces a cohomology on posets called \emph{order cohomology} that is well-suited for studying transgressions and questions of path-independence.  Subsection \ref{furtherex} provides a couple of longer examples of computing order cohomology while Subsection \ref{derivedfunctors} explains the relationship between order cohomology and the derived functors of the categorical limit.

Let $P$ be a poset.  By a system $\calG$ of abelian groups directed by poset $P$ we mean a direct system of abelian groups where the directed set $P$ need not have upper bounds.  We will denote the collection of abelian groups by  $\{\calG_p\}_{p\in P}$ and the morphisms by $\{\Phi(p,q)\}_{p\preceq q}$.  In other words, $\calG$ is a covariant functor from the poset category $P$ to the category of abelian groups.  Denote the order complex of $P$ by $X=\order(P)$, and let $\calG_p$ be the group indexed at $p\in P$.

\begin{defn}
A \emph{$k$-cochain with values in $\calG$} is an assignment of a group element $\varphi(p_0\cdots p_k)\in G_{p_k}$ for each $k$-simplex $p_0\cdots p_k$ of the order complex $X$.
\end{defn}

The set of such will be denoted $C^k(X;\calG)$ and is naturally an abelian group isomorphic to $\prod \calG_{\om(\s)}$ where $\s$ ranges over $k$-simplices of $X$ and $\om(\s)$ is the top vertex of $\s$.

For any subset $Y\subset X$ of the order complex (as an abstract simplicial complex), let $C^*(Y;\calG)$ denote those with support in $Y$.  There is a natural coboundary map $\d^k\colon C^k(Y;\calG)\to C^{k+1}(Y;\calG)$ given in C\v{e}ch-like fashion:
\begin{equation}
(\d^k \varphi)(p_0\cdots p_{k+1}) = \sum_{i=0}^k (-1)^i \varphi(p_0\cdots \hat{p_i}\cdots p_k) +\Phi(p_k,p_{k+1})\varphi(p_0\cdots p_k)
\end{equation}
where for any pair $p\preceq q$, $\Phi(p,q)\colon \calG_p\to \calG_q$ is the directed homomorphism from $p$ to $q$ in the system.  One checks that $\d\circ\d=0$.

\begin{defn}
The cohomology of the complex $\{C^*(Y;\calG),\d\}$ will be called the \emph{order cohomology} of the subset $Y$ relative the system $\calG$.  It will be denoted $H^*(Y;\calG)$.
\end{defn}

\begin{ex}
Let $P$ be a poset and $\calG$ a system of abelian groups such that $\calG_p\cong G$ for some fixed abelian group $G$.  Then, $H^k(X;\calG)$ is naturally isomorphic to the simplicial cohomology of $X=\order(P)$ with coefficients in $G$.
\end{ex}

\begin{ex}[Wedge]\label{wedge}
Consider the poset $P$ on three elements given by $a,b\preceq c$.  Let $\calG$ be a system directed by $P$ with vector spaces $\{V_p\}$ and morphisms $\{\varphi_{pq}\}$.  We show that the order cohomology $H^*(\order(P);\calG)$ may be non-trivial.

By definition $0$-cocycles are assignments $v_p\in V_p$ such that $\varphi_{pc}(v_p)=v_c$ for $p=a,b$ so that the non-triviality of the $0$-th order cohomology is dependent on the maps $\varphi_{pc}$.  For example, if the morphisms $\varphi_{ac},\varphi_{bc}$ share non-trivial image, then $H^0\neq 0$.  Also, by definition the first order cohomology is $(V_c\oplus V_c)/\langle (v_c-\varphi_{ac}(v_a),v_c-\varphi_{bc}(v_b))| v_a\in V_a, v_b\in V_b, v_c\in V_v\rangle$.  In particular, if $V_a=V_b=0$ and $V_c\neq 0$, then $H^1\neq 0$.

Note that the set $\{a,b\}$ determines a cross-cut in $P$.  Since the above analysis shows that $H^1$ need not be trivial, we see that a Cross-Cut Theorem does not hold for order cohomology.
\end{ex}

For simplicity, we henceforth write $C^*(Y)=C^*(Y;\calG)$ when the system is clear from context.  There is a natural notion of relative chain groups $C^*(X,Y;\calG)$ satisfying the short-exact sequence:
\begin{equation}
0\to C^*(Y)\to C^*(X)\to C^*(X,Y)\to 0
\end{equation}
with associated long-exact sequence on homology:
\begin{equation}
\cdots\to H^k(Y)\to H^k(X)\to H^k(X,Y)\to H^{k+1}(Y)\to\cdots
\end{equation}
Since $X=\order(P)$ is a simplicial complex, there is an analogous Mayer-Vietoris sequence on order cohomology for any covering of the complex.

However, there is no general Mayer-Vietoris sequence for covers relative the poset $P$.  Let $\{\Delta_1,\Delta_2\}$ be a covering for $P$.  In general, the associated order complexes $U_i=\order(\Delta_i)$ do not cover the order complex $X=\order(P)$. Let $Y$ be the set difference (as abstract simplicial complexes) and note that $C^*(U_1\cap U_2)=C^*(\order(\Delta_1\cap\Delta_2))$.  The sequence
\begin{equation}
0\to C^*(X)\to C^*(U_1)\oplus C^*(U_2)\to C^*(U_1\cap U_2)\to 0
\end{equation}
given by restriction and difference, respectively, fails to be exact only at $C^*(X)$ because the kernel of the restriction map is the set of cochains with support not subordinate to the coverings $U_i$, namely $C^*(Y)$.  Thus, the sequence
\begin{equation}\label{pseudoMV}
0\to C^*(Y)\to C^*(X)\to C^*(U_1)\oplus C^*(U_2)\to C^*(U_1\cap U_2)\to 0
\end{equation}
is exact.  Because the simplices of $X$ form a canonical basis for $C^*(X)$, the elements of $Y$ are a subbasis and induce a canonical splitting $Q^*(X,Y)\subset C^*(X)$:
\begin{equation}
0\to C^*(Y) \to C^*(X)\leftrightharpoons Q^*(X,Y)\to 0
\end{equation}
where $Q^*(X,Y)\cong C^*(X,Y)$.  Precisely, $Q^*(X,Y)$ consists of cochains supported on simplices subordinate to the cover.  Thus, we can splice Equation (\ref{pseudoMV}) to obtain a short-exact sequence:
\begin{equation}
0\to Q^*(X,Y)\to C^*(U_1)\oplus C^*(U_2)\to C^*(U_1\cap U_2)\to 0
\end{equation}
with associated long-exact sequence:
\begin{equation}
\cdots\to H^k(Q)\to H^k(U_1)\oplus H^k(U_2)\to H^k(U_1\cap U_2)\to H^{k+1}(Q)\to\cdots
\end{equation}
where $H^*(Q)$ is the order cohomology relative the chain complex $Q^*(X,Y)$.

\begin{ex}
When the cover $\{U_i\}$ is disjoint, $H^*(U_1\cap U_2)$ vanishes, so that $H^*(Q)\cong H^*(U_1)\oplus H^*(U_2)$.  If furthermore, the $\Delta_i$ are incomparable, then the set difference $Y=0$ so that
\begin{equation}
H^*(X)\cong H^*(Q)\cong H^*(U_1)\oplus H^*(U_2)
\end{equation}
as expected.
\end{ex}

\begin{ex}
If $f\colon P\to Q$ is an order-preserving map of posets, and $\calG$ a system directed by $Q$, there is an induced pull-back system $f^*\calG$ given by setting $(f^*\calG)_p=\calG_{f(p)}$ with maps $\varphi_{pp'}=\varphi_{f(p)f(p')}$ for $p,p'\in P$.
\end{ex}

\begin{remark}
A push-forward system is not well-defined.  For example, let $P=\{a,b\preceq c\preceq d\}$ and $Q=\{x\preceq y\preceq z\}$ and consider the order-preserving map $f(a)=f(c)=y$ and $f(b)=x$ and $f(d)=z$.
\end{remark}

\subsection{Further Examples}\label{furtherex}
In this subsection we present two longer examples of order cohomology.  The first will be needed later in the proofs of Corollary \ref{fixed} and Proposition \ref{coincide}.  The second illustrates the computation of order cohomology via a covering of the directing poset.  As explained earlier, care needs to be taken to account for cochains with support not subordinate to the covering.

\begin{prop}[Inverted cone]\label{acyclic}
Let $P$ be a poset with an unique minimal element $p\in P$, and $\calG$ a system of abelian groups directed by $P$.  Then, the order cohomology of $X=\order(P)$ is
\begin{equation}
H^k(X)=H^k(X;\calG)\cong
\begin{cases}
\calG_p\quad k=0\\
0\quad k>0
\end{cases}
\end{equation}
so that $X$ is acyclic relative any system $\calG$.
\end{prop}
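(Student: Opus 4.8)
The plan is to realize $X=\order(P)$ as a cone with apex the minimal element $p$ and to produce an explicit cochain contraction, in the spirit of the proof that an ordinary cone is acyclic, but adapted to the value-at-top-vertex convention for order cochains. Because $p$ is the unique minimal (hence least) element, $p\preceq q$ for every $q\in P$, so for any chain $\tau=(q_0\prec\cdots\prec q_{k-1})$ with $q_0\neq p$ the prepended chain $p\ast\tau:=(p\prec q_0\prec\cdots\prec q_{k-1})$ is again a simplex of $X$; crucially, $p\ast\tau$ has the same top vertex $q_{k-1}$ as $\tau$.

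I would first dispose of degree zero directly. A $0$-cocycle is an assignment $q\mapsto\varphi(q)\in\calG_q$ satisfying $\varphi(q_1)=\Phi(q_0,q_1)\varphi(q_0)$ whenever $q_0\prec q_1$. Taking $q_0=p$ forces $\varphi(q)=\Phi(p,q)\varphi(p)$, so $\varphi$ is determined by $\varphi(p)\in\calG_p$; conversely, functoriality of the system, $\Phi(q_0,q_1)\Phi(p,q_0)=\Phi(p,q_1)$, shows that any value at $p$ extends to a genuine cocycle. Hence evaluation at $p$ yields $H^0(X)\cong\calG_p$.

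For the vanishing in positive degrees I would define $h\colon C^k(X)\to C^{k-1}(X)$, $k\geq 1$, by coning at the bottom: set $(h\varphi)(\tau)=\varphi(p\ast\tau)$ when the bottom vertex of $\tau$ is not $p$, and $(h\varphi)(\tau)=0$ otherwise. Since coning at $p$ does not alter the top vertex, $h\varphi$ automatically lands in the correct group $\calG_{q_{k-1}}$ and no structure map enters its definition; this is the one place where the top-vertex convention does real work. It then suffices to prove the homotopy identity $\d h+h\d=\id$ on $C^k(X)$ for $k\geq 1$, for then every cocycle $\varphi$ in positive degree satisfies $\varphi=\d(h\varphi)$, whence $H^k(X)=0$.

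I expect the verification of $\d h+h\d=\id$ to be the main obstacle, though it is bookkeeping rather than a conceptual difficulty. Expanding both $(\d h\varphi)(\tau)$ and $(h\d\varphi)(\tau)$ on a $k$-simplex $\tau=(q_0\prec\cdots\prec q_k)$, each coned face $\varphi(p\ast(\,\cdot\,))$ appears twice with opposite sign and cancels, leaving precisely $\varphi(\tau)$; the two terms carrying the structure map $\Phi(q_{k-1},q_k)$, one from the last term of $\d h$ and one from the last term of $h\d$, likewise cancel by opposite signs. The simplices with $q_0=p$ must be treated separately, but there $h\d\varphi$ vanishes while the degenerate faces in $\d h\varphi$ drop out, again returning $\varphi(\tau)$. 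The only genuine care needed is to track the alternating coboundary signs through the insertion of $p$ as the new bottom vertex.
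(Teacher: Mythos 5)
Your argument is correct, and it rests on the same geometric idea as the paper's proof: coning every chain at the unique minimal element $p$. The difference is in the packaging. The paper fixes a cocycle $\gamma$, solves for a primitive $\b$ on the simplices not containing $p$ (leaving $\b$ arbitrary on those that do), and then propagates the identity $\gamma=\d\b$ from the coned simplices to all simplices by comparing the expansions of $\d^2\b=0$ and $\d\gamma=0$. You instead define a genuine degree-lowering operator $h$ on all cochains by $(h\varphi)(\tau)=\varphi(p\ast\tau)$ (zero when $\tau$ already contains $p$) and verify the contraction identity $\d h+h\d=\id$ in degrees $k\geq 1$; your observation that prepending $p$ does not change the top vertex, so that no structure map $\Phi$ intervenes and $h\varphi$ lands in the correct group, is exactly the point that makes this work under the value-at-top-vertex convention. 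In fact, if one takes the paper's arbitrary values on $p$-containing simplices to be zero, the paper's $\b$ is precisely your $h\gamma$, so the two constructions produce the same primitive. Your version proves slightly more (the identity is chain-homotopic to zero in positive degrees, for arbitrary cochains, not just cocycles) at the cost of one sign-bookkeeping computation, which you have set up correctly: the only loose phrase is calling the faces containing $p$ ``degenerate''---they are honest faces on which $h\varphi$ vanishes by definition---but the cancellation you describe is right.
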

\begin{proof}
First, we show that $H^0(X)\cong \calG_p$.  For any $g\in \calG_p$, we may define a cocycle $[\gamma]\in H^0(X)$ by setting $\gamma(p)=g$ and $\gamma(q)=\Phi(p,q)g$ for any $q\succeq p$.  Evidently, any element of $H^0(X)$ arises in this manner, and the construction defines an isomorphism.

Now, we show that $H^k(X)$ vanishes for $k>0$.  Let $\gamma\in \ker \d^k$ be a cocycle.  We claim that the following conditions define a cochain $\b\in C^{k-1}(X;\calG)$ such that $\d\b=\gamma$:
\begin{enumerate}
\item Set $\b$ arbitrarily on $(k-1)$-simplices in $X$ containing $p=\min(P)$.
\item For any $(k-1)$-simplex $q_1\cdots q_k$ of $X$, define $\b(q_1\cdots q_k)$ by
\begin{multline}
\gamma(pq_1\cdots q_k)=\b(q_1\cdots q_k)\\
+\sum_{i=1}^{k-1} (-1)^i \b(pq_1\cdots \hat{q_i}\cdots q_k)+(-1)^k\Phi(q_{k-1},q_k)\b(pq_1\cdots q_{k-1})
\end{multline}
\end{enumerate}
By construction, $\gamma=\d\b$ for all $k$-simplices of $X$ containing $p$.  Now, note that for any $k$-simplex $q_1\cdots q_{k+1}$,
\begin{multline}
0 = (\d^2\b)(pq_1\cdots q_{k+1}) = (\d\b)(q_1\cdots q_{k+1})\\
+\sum_{i=1}^k (-1)^i (\d\b)(pq_1\cdots \hat{q_i}\cdots q_{k+1})+(-1)^{k+1}\Phi(q_k,q_{k+1})(\d\b)(pq_1\cdots q_k)
\end{multline}
Because $\gamma$ is a cocycle,
\begin{multline}
0 = (\d\gamma)(pq_1\cdots q_{k+1}) = \gamma(q_1\cdots q_{k+1})\\
+\sum_{i=1}^k (-1)^i \gamma(pq_1\cdots \hat{q_i}\cdots q_{k+1})+(-1)^{k+1}\Phi(q_k,q_{k+1})\gamma(pq_1\cdots q_k)
\end{multline}
Comparing the two equations, we conclude that $\gamma=\d\b$ for arbitrary $k$-simplices.
\end{proof}

\begin{prop}[Double cone]
Let $P$ be a poset that is the union of two subposets $\Delta_1,\Delta_2$ which intersect at precisely one point $p\in P$ which is simultaneously the unique minimal element of $\Delta_1$ and the unique maximal element of $\Delta_2$.  Let $\calG$ be a system of abelian groups directed by $P$, and write $X=\order(P)$ and $U_i=\order(\Delta_i)$.  Then,
\begin{equation}
H^1(X;\calG)\cong H^1(U_2;\calG)
\end{equation}
and if $\calG$ were furthermore a system of vector spaces,
\begin{equation}
H^0(X;\calG)\cong H^0(U_2;\calG)
\end{equation}
\end{prop}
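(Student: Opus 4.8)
The plan is to exploit the asymmetry between the two cones. Since $p$ is the unique minimal element of $\Delta_1$, the subcomplex $U_1$ is an inverted cone, so Proposition~\ref{acyclic} gives $H^0(U_1)\cong\calG_p$ and $H^k(U_1)=0$ for $k>0$; and $U_1\cap U_2=\order(\{p\})$ is a single point, so $H^0(U_1\cap U_2)\cong\calG_p$ with all higher groups vanishing. I would feed these two inputs through the Mayer--Vietoris-type machinery built around Equation~(\ref{pseudoMV}).

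First, from the spliced short exact sequence $0\to Q^*(X,Y)\to C^*(U_1)\oplus C^*(U_2)\to C^*(U_1\cap U_2)\to 0$ and its long exact sequence, the acyclicity of $U_1$ and of the point collapses everything above degree one and gives $H^k(Q)\cong H^k(U_2)$ for $k\ge 2$. At the bottom, $0\to H^0(Q)\to\calG_p\oplus H^0(U_2)\xrightarrow{\psi}\calG_p\to H^1(Q)\to H^1(U_2)\to 0$, I would observe that the restriction $H^0(U_1)\to H^0(U_1\cap U_2)$ is an isomorphism (an inverted-cone $0$-cocycle is determined by its value at the apex $p$), so $\psi$ is already surjective on its first summand. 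Hence the connecting map into $H^1(Q)$ vanishes, $\ker\psi$ projects isomorphically onto the second summand, and we obtain $H^1(Q)\cong H^1(U_2)$ and $H^0(Q)\cong H^0(U_2)$.

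Next I would compare $H^*(X)$ with $H^*(Q)=H^*(X,Y)$ through the long exact sequence of $0\to C^*(Y)\to C^*(X)\to C^*(X,Y)\to 0$, where $Y=X\setminus(U_1\cup U_2)$ is the set of crossing chains, those meeting both $\Delta_2\setminus\{p\}$ (below $p$) and $\Delta_1\setminus\{p\}$ (above $p$). The crux --- and the step I expect to be the main obstacle --- is to prove that $C^*(Y)$ is acyclic. Since no vertex is crossing, $C^0(Y)=0$ and $H^0(Y)=0$ for free. For the positive degrees I would use that $p$ lies strictly between every lower and every upper vertex: each $p$-free crossing simplex $\sigma$ has a canonical partner $\sigma^+$ obtained by inserting $p$ between its lower and upper parts, and $\sigma\mapsto\sigma^+$ puts the $p$-free crossing simplices in bijection with the $p$-containing ones while raising dimension by one. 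I would build a contracting homotopy $h$ from this (setting $h=0$ on $p$-containing simplices and $h\psi(\sigma)=\pm\psi(\sigma^+)$ otherwise) and verify $\delta h+h\delta=\pm\,\id$. The mechanism is already transparent on a triangle $apb$: its faces $ap$ and $pb$ lie in $U_2$ and $U_1$, so a cochain supported on $Y$ vanishes there and $(\delta\varphi)(apb)=\pm\varphi(ab)$; this alone kills every crossing-edge cocycle and proves $H^1(Y)=0$ directly.

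The genuine difficulty in the homotopy is the sign bookkeeping coupled with the faces that leave $Y$: when deleting a vertex empties the lower or the upper part of a chain, the resulting face falls into $U_1\cup U_2$ and is annihilated, and I must confirm that these dropped terms occur symmetrically in $\delta h$ and $h\delta$ so as to cancel. Granting $H^*(Y)=0$, the long exact sequence yields $H^k(X)\cong H^k(Q)\cong H^k(U_2)$, in particular $H^1(X)\cong H^1(U_2)$. For the degree-zero statement there is also a direct route: a $0$-cocycle is a compatible family $(v_q)$ with $\Phi(q,q')v_q=v_{q'}$, and since the values on $\Delta_1\setminus\{p\}$ are forced by $v_p$ (as $p=\min\Delta_1$), such a family is determined by and freely extends from its restriction to $\Delta_2$, giving $H^0(X)\cong H^0(U_2)$. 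This identification is most cleanly organized over a field, in line with the stated vector-space hypothesis, where the short exact sequences of cochain groups split and no extension problems intervene.
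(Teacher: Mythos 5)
Your proposal is correct, and it takes a genuinely different route from the paper's. The paper never passes through the relative complex: it works directly with the non-exact sequence $0\to C^*(X)\to C^*(U_1)\oplus C^*(U_2)\to C^*(U_1\cap U_2)\to 0$, hand-builds a connecting homomorphism $H^0(U_1\cap U_2)\to H^1(X)$ by lifting a class $\gamma\in\calG_p$, normalizing the lift so its $U_1$-component vanishes at $p$, extending the $U_2$-component over $U_1$ via $\Phi(p,\cdot)$, and observing that the resulting $1$-cochain is a coboundary; it then reads both isomorphisms off the resulting five-term sequence, invoking Proposition \ref{acyclic} for $U_1$ and a vector-space splitting for the $H^0$ claim. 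You instead run the two genuinely exact sequences already set up in Section \ref{order} --- the spliced sequence computing $H^*(Q)$ and the relative sequence $0\to C^*(Y)\to C^*(X)\to Q^*(X,Y)\to 0$ --- and reduce everything to $H^*(Y)=0$. That reduction is sound, and the contracting homotopy you defer does close up: the top vertex of a crossing chain always lies in $\Delta_1\setminus\{p\}$, so the $\Phi$-twisted top face of $\d$ never involves omitting or inserting $p$; and whenever deleting a vertex renders a chain non-crossing, the $p$-augmented version of that face is also non-crossing, so the corresponding term of $h\d$ vanishes for exactly the same reason the term of $\d h$ does, and the telescoping gives $\d h+h\d=\id$ on $C^*(Y)$. (One caution: your one-line $H^1(Y)=0$ observation is not by itself enough for $H^1(X)\cong H^1(Q)$, since the long exact sequence also needs $H^2(Y)=0$; the full homotopy really is required.) What your route buys: it is the more systematic argument --- the paper's five-term sequence is asserted rather than derived from a short exact sequence of complexes, since $C^*(X)\to\oplus_i C^*(U_i)$ is not injective --- it yields $H^k(X)\cong H^k(U_2)$ in every degree rather than only $k=0,1$, and your direct restriction/extension argument for $0$-cocycles shows the $H^0$ isomorphism holds for arbitrary abelian groups, so the vector-space hypothesis is an artifact of the paper's splitting argument rather than a necessity.
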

\begin{proof}
Since the system $\calG$ is fixed, we will write $H^*(X)$ for $H^*(X;\calG)$.  Let $Y=X-U_1-U_2$.  Note that we have the diagram with exact rows,
\begin{equation*}
\begin{CD}
0 @>>> C^1(Y) @>>> C^1(X) @>>> \oplus_i C^1(U_i) @>>> C^1(U_1\cap U_2) @>>> 0\\
& & @AAA @AA\d A @AA\d A @AA\d A\\
& & 0 @>>> C^0(X) @>\rho>> \oplus_i C^0(U_i) @>>> C^0(U_1\cap U_2) @>>> 0 
\end{CD}
\end{equation*}
given by restriction and difference.  The fact that $C^1(X)$ does not inject for arbitrary $\calG$ means that the connecting homomorphism on homology is not uniquely defined.  We will exploit this non-uniqueness.

Let $\gamma\in C^0(U_1\cap U_2)\cong \calG_p$ be a cocycle.  It lifts to an element $(\a,\b)\in C^0(U_1)\oplus C^0(U_2)$ and we may assume that $\a(p)=0$ by adjusting with an element of $\im \rho$.  Then, using the value $\b(p)$ we may extend $\b$ over $U_1$, and this extension $\conj{\b}$ agrees with $\gamma$ at $p$.  Define $\tau\in C^1(X)$ by
\begin{equation*}
\tau(a,b)=
\begin{cases}
\d\conj{\b}(ab) \quad \text{if}\quad a,b \quad \text{are vertices in the same cover element} \quad U_i\\
\d\conj{\b}(ap) + \d\conj{\b}(pb) \quad a\preceq p\preceq b
\end{cases}
\end{equation*}
It is a coboundary in $C^1(X)$, and we may define a connecting homomorphism by $d[\gamma]=[\tau]$.  Then, $\im d=0$ and the long-exact sequence,
\begin{equation*}
0\to H^0(X)\to \oplus H^0(U_i)\to H^0(U_1\cap U_2)\stackrel{d}{\to} H^1(X)\to \oplus H^1(U_i)\to 0
\end{equation*}
implies that
\begin{equation}
H^1(X)\cong H^1(U_1)\oplus H^1(U_2) = H^1(U_2)
\end{equation}
by the preceding example since $\Delta_1$ has an unique minimal element.  Also, the resulting short exact sequence
\begin{equation*}
0\to H^0(X)\to H^0(U_1) \oplus H^0(U_2)\to H^0(U_1\cap U_2)\to 0
\end{equation*}
splits for vector spaces to yield the second assertion.
\end{proof}

\subsection{Various Homological Constructions}\label{derivedfunctors}
Let $P$ be a poset and recall that a system of abelian groups is a covariant functor from the poset category $P$ to abelian groups.  In this subsection we will show that if $P^{op}$ denotes the opposite category of $P$ and $X^{op}$ the associated order complex, then under a technical assumption, $H^k(X^{op};\calG)\cong \lim^k \calG$ the derived functors of the categorical limit.  Except for Equations (\ref{sescomplex}) and (\ref{leshomology}), most of the results in this subsection are not needed explicitly in subsequent sections.

We start by showing that $H^0(X^{op};\calG)=\lim \calG$.  Let $\pi_p\colon \prod_i \calG_i \to \calG_p$ the projection onto the component $\calG_p$ indexed by $p\in P$, and recall that $\calG$ is a covariant functor.  Then
\begin{equation}
H^0(X^{op};\calG) = \left\{g\in \prod_i \calG_i| \pi_p(g)=(\Phi_{pq}\circ\pi_q)(g), p\preceq q \right\} \cong \lim \calG
\end{equation}
Hence, $H^0(X;\calG)$ is the categorical limit of $\calG$ with respect to the opposite underlying poset category $P^{op}$.

Let $\calG, \calF$ be systems of abelian groups over a poset $P$ with $X=\order(P)$.  Suppose $\psi\colon \calG\to \calF$ is a morphism of systems.  There is an induced map $\psi_*\colon C^k(X;\calG)\to C^k(X;\calF)$ given by push-forward $\psi_*(\varphi)=\psi\circ \varphi$.  Note that
\begin{equation*}
\psi_*\d\varphi = \psi_*\sum (-1)^i \varphi[i] = \sum (-1)^i \psi_*\varphi[i] = \sum (-1)^i (\psi_*\varphi)[i] = \d\psi_*\varphi
\end{equation*}
where for any $k$-simplex, $[i](v_0\cdots v_k)=v_0\cdots\hat{v_i}\cdots v_k$ the omission of the $i$-th vertex, and we have notationally suppressed $\Phi$.  Hence $\d,\psi_*$ commute, so that the cochain maps descend to cohomology $\psi_*\colon H^k(X;\calG)\to H^k(X;\calF)$, and this defines a covariant functor $H^k(X;-)$ from systems over $P$ to abelian groups.  In particular, if $\calG,\calF$ are systems of chain complexes with $\psi\colon \calG\to \calF$, then there are induced boundary maps
\begin{align*}
\partial_k^{\calG}\colon& C^k(X;\calG_j)\to C^k(X;\calG_{j-1})\\
\partial_k^{\calF}\colon& C^k(X;\calF_j)\to C^k(X;\calF_{j-1})
\end{align*}
The cochain map $\varphi_*$ preserves bigrading and as before descends to a homology map $\psi_*\colon H^k(X;\calG_j)\to H^k(X;\calF_j)$.

Next, let $\psi\colon \calA\to \calB$ be a morphism of systems over poset $P$, and let $p\preceq q$ in $P$.  The diagram
\begin{equation*}
\begin{CD}
\calA_q @>\psi_q>> \calB_q\\
@A\Phi_{pq}^{\calA}AA @AA\Phi_{pq}^{\calB}A\\
\calA_p @>\psi_p>> \calB_p
\end{CD}
\end{equation*}
shows that $\Phi_{pq}(\ker \psi_p)\subset \ker \psi_q$ and $\Phi_{pq}(\im \psi_p)\subset \im \psi_q$, so that the systems $\ker \psi$, $\im \psi$, and $\coker \psi$ are well-defined, and we may define short-exact sequences of systems.  Using the fact that for any system $\calG$, the cochain group $C^k(X;\calG)$ is naturally isomorphic to $\prod \calG_{\om(\s)}$ where $\s$ ranges over $k$-simplices of $X$ and $\om(\s)$ is the top vertex of $\s$, one readily checks that $C^k(X;-)$ is an exact covariant functor from systems over $P$ to abelian groups.

For a short-exact sequence of systems, $0\to \calA\to \calB\to \calC\to 0$, because the diagram
\begin{equation}
\begin{CD}
C^k(X;\calA) @>>> C^k(X;\calB) @>>> C^k(X;\calC)\\
@V\d VV @V\d VV @V\d VV\\
C^{k+1}(X;\calA) @>>> C^{k+1}(X;\calB) @>>> C^{k+1}(X;\calC)
\end{CD}
\end{equation}
commutes, we get a short-exact sequence of complexes:
\begin{equation}\label{sescomplex}
0\to C^k(X;\calA) \to C^k(X;\calB) \to C^k(X;\calC)\to 0
\end{equation}
and a long-exact sequence on order cohomology:
\begin{equation}\label{leshomology}
\cdots\to H^k(X;\calA)\to H^k(X;\calB)\to H^k(X;\calC)\stackrel{d}{\to} H^{k+1}(X;\calA)\to\cdots
\end{equation}
For example, the order cohomology of the system $\calF\oplus\calG$ fits into the long-exact sequence:
\begin{equation*}
\cdots\to H^k(X;\calF)\to H^k(X;\calF\oplus\calG)\to H^k(X;\calG)\to H^{k+1}(X;\calF)\to\cdots
\end{equation*}
Such long-exact sequences will be used in proving the main result in Section \ref{pathind}.  Also, for a morphism of short-exact sequences of systems:
\begin{equation}
\begin{CD}
0 @>>> \calA @>>> \calB @>>> \calC @>>> 0\\
& & @V\a VV @V\b VV @V\gamma VV\\
0 @>>> \calA' @>>> \calB' @>>> \calC' @>>> 0
\end{CD}
\end{equation}
a standard ``diagram chase'' shows that the connecting homomorphism in the long-exact sequence on homology is natural:
\begin{equation}
\begin{CD}
H^k(X;\calC) @>d >> H^k(X;\calA)\\
@V\gamma_* VV @V\a_* VV\\
H^k(X;\calC') @>d >> H^k(X;\calA')
\end{CD}
\end{equation}

\begin{remark}
The preceding results on exact sequences show that $H^k(X;-)$ is a cohomological $\d$-functor.
\end{remark}

Next, we show that the functors $H^k(X;-)$ are left-exact:  Let $\colon \calA\stackrel{f}{\to} \calB\to \calC$ be exact with induced sequence $0\to H^k(X;\calA)\to H^k(X;\calB)\to H^k(X;\calC)$.  Exactness at $H^k(X;\calB)$ is routine.  For exactness at $H^k(X;\calA)$, note that $f_*$ is injective as a cochain map.  Let $\s=\d\tau$ where $\tau\in C^{k-1}(X;\calB)$.  We show that the pre-image of $\s$ is a coboundary:  By injectivity of $f_*$ there is an unique inverse cochain $\rho=f_*^{-1}\tau\in C^{k-1}(X;\calA)$ define by $\rho(v_0\cdots v_{k-1}) = f_*^{-1}\tau(v_0\cdots v_{k-1})$.  Then, $f_*\d\rho=\d f_*\rho = \d\tau=\s$ so that $\d\rho$ is the unique inverse of $f_*^{-1}\s$, and the homology map $f_*\colon H^k(X;\calA)\to H^k(X;\calB)$ is injective.

Finally, suppose $\calG$ possesses a resolution
\begin{equation*}
0\to \calG\to \calG^{(0)}\to \calG^{(1)}\to \cdots
\end{equation*}
by order-acyclic systems $\calG^{(k)}$, then order cohomology is an \emph{universal} cohomological $\d$-functor, and by standard uniqueness results (see Weibel\cite{weibel}), we have
\begin{equation*}
H^k(X^{op};\calG) = {\lim}^k \calG
\end{equation*}
the derived functors of the categorical limit.  However, it is unclear to the author whether order-acyclic resolutions exist for general systems.

\section{Path-Independent Transgressions}\label{pathind}

In this section, we prove the main result (Theorem \ref{path-ind}) on path-independent transgressions by viewing transgressions as chain-valued 1-cocycles on the order complex of the poset of triangulations under subdivision.  The statement of the result involves a mild technical assumption.  Since the assumption does not hold for the important case of subdivisions of a fixed simplicial complex, we treat this case separately in Corollary \ref{fixed}.

Let $M$ be a space admitting triangulations.  Recall that a triangulation of $M$ is a simplicial complex with a homeomorphism from its geometric realization to $M$.  Let $P$ be the poset of triangulations of $M$ ordered by subdivision, and define a system $\calC$ of vector spaces directed by $P$ where for each triangulation $p\in P$, we have the graded chain space $C_*(p)$ of the triangulation $p\in P$.  For each pair $p\preceq q$, let the morphism $\Phi(p,q)\colon C_*(p)\to C_*(q)$ be the induced chain map between subdivisions.  Because of the grading on $C_*(p)$, we denote by $C^{k,i}(X)=C^k(X;\calC_i)$ the $k$-cochains on $X=\order(P)$ with values in the $i$-chains of triangulations.  Hence, for a cochain $\varphi\in C^{k,i}$ and a $k$-simplex $p_o\cdots p_k$ of the order complex $X$, we have $\varphi(p_0\cdots p_k)\in C_i(p_k)$.  This gives us a double complex
\begin{equation}
\calC^{*,*} = \{C^{*,*},\d,\partial\}
\end{equation}
where $\d$ is the coboundary operator for order cohomology and $\partial$ the simplicial boundary opertaor.  Similarly, let cycle, boundary, and homology-valued systems be denoted by $\calZ$,$\calB$, and $\calH$.  We adopt the notation
\begin{equation}
Z^{k,i}(X) = C^k(X;\calZ_i)\quad\quad B^{k,i} = C^k(X;\calB_i)\quad\quad H^{k,i} = C^k(X;\calH_i)
\end{equation}
and we have associated double complexes $\calZ^{*,*}$, $\calB^{*,*}$, and $\calH^{*,*}$.

From various short-exact sequences between these systems, we may extract relationships between their order cohomologies.  Indeed, from the short-exact sequences
\begin{gather}
0\to Z^{*,i}\to C^{*,i}\stackrel{\partial}{\to} B^{*,i-1}\to 0\\
0\to B^{*,i}\to Z^{*,i}\stackrel{[\medspace]}{\to} H^{*,i}\to 0
\end{gather}
we obtain long-exact sequences
\begin{align}
\cdots\to H^k(X;\calZ^i)\to H^k(X;\calC^i)\to H^k(X;\calB^{i-1})\to H^{k+1}(X;\calZ^i)\to\cdots\\
\cdots\to H^k(X;\calB^i)\to H^k(X;\calZ^i)\to H^k(X;\calH^i)\to H^{k+1}(X;\calB^i)\to\cdots
\end{align}
respectively.

Now, fix a homology class in $H_i(M)$.  This defines an element $\eta\in H^{0,i}$ which by a choice of a cycle-representative for each triangulation defines a cycle-valued cochain $\zeta\in Z^{0,i}$.  For every $1$-simplex $pq$ in $X=\order(P)$, the difference
\begin{equation}
(\d\zeta)(pq) = \zeta(q)-\Phi(p,q)\zeta(p)
\end{equation}
of cycle-representatives between subdivisions is a boundary, and hence $\partial$-lifts to a $(i+1)$-chain in $C_{i+1}(q)$.  Thus, there is a $\partial$-lift of $\d\zeta$ to an element $\gamma\in C^{1,i+1}$ satisfying $\partial\gamma=\d\zeta$.  We will call $\gamma$ a \emph{transgression relative $\zeta$}.  When $\d\gamma=0$, we say that the transgression $\gamma$ is \emph{path-independent}. The name is motivated by the property that for any three triangulations $p\preceq q\preceq r$, a path-independent transgression satisfies the suggestive relation
\begin{equation}
\gamma(qr)+\Phi(p,q)\gamma(pq) = \gamma(pr)
\end{equation}

We can now state one of our main results:
\begin{thm}\label{path-ind}
Suppose the system $\calC$ has no non-trivial $i$-cycle assignment, or equivalently that the kernel of $\d_Z^0\colon Z^{0,i}\to Z^{1,i}$ is trivial.  Then, for any cycle-representative assignment $\zeta\in Z^{0,i}$, there is a path-independent transgression precisely when the cycles assigned are actually boundaries, $\partial\zeta=0\in H^{0,i}$.
\end{thm}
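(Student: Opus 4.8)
The plan is to recast everything through the two short-exact sequences of systems
\begin{gather*}
0\to \calZ^{i+1}\to \calC^{i+1}\stackrel{\partial}{\to} \calB^{i}\to 0,\\
0\to \calB^{i}\to \calZ^{i}\to \calH^{i}\to 0,
\end{gather*}
and their long-exact sequences in order cohomology, which are exactly the two sequences displayed just before the theorem. Write $d_B\colon H^0(X;\calH^i)\to H^1(X;\calB^i)$ and $d_A\colon H^1(X;\calB^i)\to H^2(X;\calZ^{i+1})$ for the two connecting homomorphisms. The fixed class $\eta$ is a $\d$-cocycle in $H^{0,i}$, hence a class in $H^0(X;\calH^i)$, and for a lift $\zeta\in Z^{0,i}$ the difference cochain $\d\zeta$ lands in $B^{1,i}$ and represents $d_B(\eta)$, essentially by the definition of the connecting map. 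The first reduction I would record is that a transgression $\gamma$ with $\partial\gamma=\d\zeta$ can be chosen with $\d\gamma=0$ precisely when the obstruction $d_A([\d\zeta])=d_A d_B(\eta)\in H^2(X;\calZ^{i+1})$ vanishes: any lift $\gamma_0$ with $\partial\gamma_0=\d\zeta$ has $\d\gamma_0\in Z^{2,i+1}$ a cocycle whose class is $d_A([\d\zeta])$, and $\gamma_0$ can be corrected by a cycle-valued $1$-cochain to kill $\d\gamma_0$ exactly when that class is zero. Thus the whole statement becomes: under the hypothesis, $d_A d_B(\eta)=0$ iff $\eta=0$.

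The forward implication needs no hypothesis. If the assigned cycles are boundaries (the condition written $\partial\zeta=0\in H^{0,i}$), i.e. every $\zeta(p)$ is a boundary, then since $C^0(X;-)$ is exact we may write $\zeta=\partial\sigma$ for some $\sigma\in C^{0,i+1}$. Setting $\gamma=\d\sigma$ gives $\partial\gamma=\partial\d\sigma=\d\partial\sigma=\d\zeta$ and $\d\gamma=\d\d\sigma=0$, so $\gamma$ is a path-independent transgression.

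For the converse I would first use the hypothesis. The long-exact sequence of the second short-exact sequence begins $0=H^0(X;\calZ^i)\to H^0(X;\calH^i)\stackrel{d_B}{\to} H^1(X;\calB^i)$, so $d_B$ is injective. Hence $\eta=0$ is equivalent to $[\d\zeta]=d_B(\eta)=0$ in $H^1(X;\calB^i)$, and it suffices to upgrade the vanishing of $d_A([\d\zeta])$ to the vanishing of $[\d\zeta]$ itself. The mechanism I would use is integration of the transgression: if a path-independent $\gamma$ admits a global primitive $\sigma\in C^{0,i+1}$ with $\d\sigma=\gamma$, then $w:=\zeta-\partial\sigma$ satisfies $\d w=\d\zeta-\d\partial\sigma=\d\zeta-\partial\gamma=0$ (using $\d\partial=\partial\d$ and $\d\sigma=\gamma$), so $w$ is an $i$-cycle common to every triangulation in the poset; by hypothesis $w=0$, and since $w$ differs from $\zeta$ by a boundary we get $\eta=[\zeta]=[w]=0$. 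The role of path-independence here is exactly that the cocycle relation $\d\gamma=0$ is what makes $\gamma$ a candidate for being a coboundary $\d\sigma$; geometrically it lets one transport a boundary-primitive of $\zeta$ consistently along any chain of subdivisions, the relation $\gamma(pr)=\gamma(qr)+\Phi(q,r)\gamma(pq)$ guaranteeing that the transport is independent of the chain chosen.

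The main obstacle is precisely this last step. A global primitive $\sigma$ with $\d\sigma=\gamma$ exists only when $[\gamma]=0$ in $H^1(X;\calC^{i+1})$, and after exhausting the available freedom (modifying $\gamma$ by $1$-cochains that are simultaneously $\partial$-closed and $\d$-closed, which alters $[\gamma]$ only within the image of $H^1(X;\calZ^{i+1})$) this is again equivalent to $[\d\zeta]=0$ rather than to the weaker $d_A([\d\zeta])=0$ that the mere existence of a path-independent transgression supplies. In other words the crux is to show that the composite $d_A\circ d_B$ is injective, i.e. $\ker d_A\cap\im d_B=0$ in $H^1(X;\calB^i)$; the two long-exact sequences by themselves give only $\ker d_A=\im\bigl(\partial_*\colon H^1(X;\calC^{i+1})\to H^1(X;\calB^i)\bigr)$ and $\im d_B=\ker\bigl(H^1(X;\calB^i)\to H^1(X;\calZ^i)\bigr)$, and closing the gap between these is where the no-common-cycle hypothesis must enter in an essential (not merely formal) way, together with the structure of the subdivision poset. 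I would expect to need that comparable triangulations are joined by abundant intermediate subdivisions, since for degenerate posets with no intermediate subdivisions the transport cannot be seeded at any vertex and the formal implication can break down; so this injectivity is the heart of the argument and the step I would budget the most effort for.
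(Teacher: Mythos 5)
Your forward direction and your overall reduction coincide with the paper's: the paper also produces the path-independent transgression as $\varphi=\d\tau$ with $\partial\tau=a$ when the assignment is boundary-valued, and the final step of its converse is exactly your ``integration'' argument --- once the transgression is written as $\varphi=\d\tau$, one gets $\partial\tau-a\in\ker\d_Z^0=0$, hence $a=\partial\tau$ is boundary-valued. The genuine gap is the one you flag yourself: you never show that a path-independent transgression can be modified into one of the form $\d\tau$, i.e.\ that $[\d\zeta]=d_B(\eta)$ vanishes in $H^1(X;\calB^i)$ rather than merely lying in $\ker d_A=\im\partial_*$. What you actually prove is ``$\eta=0$ iff $d_Ad_B(\eta)=0$, \emph{provided} $\ker d_A\cap\im d_B=0$,'' and that intersection statement is left open. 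As a proof of the theorem this is incomplete.

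That said, your diagnosis of where the difficulty sits is accurate, and you should compare it with how the paper disposes of it: from $\partial\varphi=\d_Z a$ the paper asserts in one line that $\partial_*[\varphi]=0$ in $H^1(X;\calB^i)$ and then runs your integration argument. But $\d_Z a$ is the coboundary of the \emph{cycle}-valued cochain $a\in Z^{0,i}$, so it is exact in the complex $Z^{*,i}$, not visibly in $B^{*,i}$; and under the kernel hypothesis the statement ``$\d_Z a=\d b$ for some boundary-valued $b\in B^{0,i}$'' forces $a-b\in\ker\d_Z^0=0$, i.e.\ it is equivalent to the conclusion being proved. So the paper's proof passes over precisely the point you identify as the crux. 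Your suspicion that formal homological algebra alone cannot close it is also well founded: for the sub-poset $\{p,q\preceq r\}$ given by two incomparable triangulations of $S^1$ with disjoint vertex sets and a common refinement, one has $\ker\d_Z^0=0$ in degree $0$, every transgression is vacuously path-independent (the order complex has no $2$-simplices), yet the vertex assignment representing the generator of $H_0$ is not boundary-valued. Any complete proof of the converse must therefore exploit the order-complex topology of the full triangulation poset (your ``abundant intermediate subdivisions''); neither your proposal nor, as written, the paper supplies that step, so this is exactly where your effort should go.
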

\begin{proof}
Let $x\in H^{0,i}$ be the fixed homology class in $H_i(M)$ and $a\in Z^{0,i}$ any choice of cycle representatives.  The theorem states that under the given kernel hypothesis, $x$ is trivial iff there exists a $0\neq \varphi\in C^{1,i+1}$ such that $\d\varphi=0$ and $\partial\varphi=\d_Z a$, that is, $\varphi$ is a path-independent transgression for $x$ relative $a$.

We have the following diagram:
\begin{equation}
\begin{CD}
x\in H^{0,i} @<[\quad]<< Z^{0,i}\ni a\\
& & @VV\d_Z V\\
& & B^{1,i} @<\partial << C^{1,i+1}\ni \varphi
\end{CD}
\end{equation}

Suppose $x\in H^{0,i}$ is trivial.  If the cycle-representatives are all trivial, then certainly the zero transgression works.  Thus, suppose $a$ is any \emph{non-trivial} choice of cycle-representatives for $X$.  From the commutativity of the diagram:
\begin{equation}
\begin{CD}
H^{0,i} @<[\quad]<< Z^{0,i} @<\partial<< C^{0,i+1}\\
& & @VV\d_Z V  @VV\d V\\
& & B^{1,i} @<\partial << C^{1,i+1}\ni \varphi
\end{CD}
\end{equation}
we may lift $a\in Z^{0,i}$ to $C^{0,i+1}$ and push-down via $\d$ to obtain a path-independent transgression relative cycle-representatives $a$.  Note that this transgression is not trivial because we are assuming that the system has no non-trivial cycle assignment, $\ker \d_Z^0=0$.  Thus, if the boundary assignment is non-trivial, then one can find a non-trivial path-independent transgression relative to it.

Conversely, suppose that a path-independent transgression exists.  We will show that $x=0$.  First, note that $[\varphi]\in H^{1,i+1}$ is a $\d$-cohomology class.  From the short-exact sequence
\begin{equation*}
0\to Z^{*,i+1}\stackrel{j}{\to} C^{*,i+1}\stackrel{\partial}{\to} B^{*,i}\to 0
\end{equation*}
we have the long-exact sequence
\begin{equation*}
\cdots\to H^1(M;\calZ^{i+1})\stackrel{j_*}{\to} H^1(M;\calC^{i+1})\stackrel{\partial_*}{\to} H^1(M;\calB^i)\stackrel{d^1}{\to} H^2(M;\calZ^{i+1})\to \cdots
\end{equation*}
Since $\partial\varphi=\d_Z a$, we have $\partial_*[\varphi]=0$ so that $[\varphi]=j_*[\psi]$ for some $\d$-closed $\psi\in H^1(M;\calZ^{i+1})$.  Writing $\varphi = j_*\psi+\d\tau$ for some $\tau\in C^{0,i+1}$, we have $\d_Z a=\partial\varphi=\partial\d\tau$.  Since transgressions are determined up to cycle-assignments, we may modify $\varphi$ to $\varphi-j_*\psi$ and assume that our transgression actually $\d$-lifts, $\varphi=\d\tau$.  By commutativity of the diagram
\begin{equation}
\begin{CD}
x\in H^{0,i} @<[\quad]<< Z^{0,i} @<\partial<< C^{0,i+1}\ni \tau\\
& & @VV\d_Z V  @VV\d V\\
& & B^{1,i} @<\partial << C^{1,i+1}
\end{CD}
\end{equation}
we have $\partial\tau-a\in \ker \d_Z^0$.  But $\ker \d_Z^0$ is trivial so that $\partial\tau=a$ and $x=[a]=0\in H^{0,i}$ is trivial.
\end{proof}

An example of a system where the hypothesis on $\d_Z^0$ fails is the poset of triangulations of a fixed simplicial complex.  The kernel is non-trivial because any cycle of the fixed complex includes into every subdivision of the complex.  This example is addressed in Corollary \ref{fixed}.  However, if the underlying space is not already a simplicial complex, the hypothesis is mild since homeomorphisms from simplicial complexes to a space admitting triangulations do not generally ``fix cycles'' in the image.  More precisely, given a space $M$ admitting triangulations, and choosing arbitrary triangulations $\varphi,\psi\colon |K|,|L|\to M$, the homeomorphism $\psi^{-1}\circ \varphi:|K|\to |L|$ generally does not send realizations of simplicial.cycles in $K$ to realizations of simplicial cycles in $L$.

\begin{ex}[Euler transgressive chains]
An example of a combinatorial transgression is that associated to \emph{Euler cycles}.  Let $X$ be a simplicial complex, and define the \emph{Euler cycle} \cite{forman} for $X$ to be the $0$-chain
\begin{equation}\label{eulerchain}
e(X) = \sum_{v\in X} e(v,X)v
\end{equation}
over vertices $v$, where
\begin{equation}
e(v,X) = \sum_{k=0} (-1)^k \frac{\text{\# $k$-simplices containing $v$}}{k+1}
\end{equation}
The coefficients of the $0$-chain sum to the Euler characteristic because for each $k$-simplex, unity is being equally divided among its $(k+1)$-vertices, and we are taking an alternating sum over dimension.  When $X$ is actually a combinatorial manifold, rational combinatorial characteristic classes exist, and the Euler cycle is a representative cycle for the Poincar\'{e}-dual of the rational Euler class.  This explains the nomenclature.

\begin{remark}
We may rewrite the preceding coefficient expression as
\begin{equation}
e(\lk(v,X)) = 1+ \sum_{k=0} (-1)^{k+1} \frac{\text{\# $k$-simplices in $\lk(v,X)$}}{k+2}
\end{equation}
Using Gaifullin's terminology\cite{gaifullin}, $e(\lk)$ is called a \emph{local formula} and $e(X)$ a \emph{characteristic local cycle}.
\end{remark}

Since the Euler characteristic is invariant under subdivision, if $i\colon Y \to X$ is a subdivision of $Y$, the difference $e(X)-i_*e(Y)$ is a $0$-chain whose coefficients sum to zero, and hence, the boundary of a $1$-chain on $X$ which is our desired transgression.  The preceding theorem implies that for spaces satisfying the mild hypothesis, transgressions relative to Euler cycles are not path-independent except when the Euler characteristic of the space vanishes.  In Section \ref{local} we will obtain a transgression relative Euler cycles which is locally determined in a precise manner.
\end{ex}

\begin{ex}
Let $M$ be a combinatorial manifold.  Then, rational combinatorial Pontryagin classes exist.  Fixing a Pontryagin cohomology class, suppose representative cycles are chosen for the homology duals.  This defines a cycle-assignment on the poset of triangulations, and if $M$ satisfies the hypothesis of the above theorem, transgressions relative to such combinatorial Pontryagin cycle assignments are not path-independent except when the Pontryagin class is trivial.
\end{ex}

\begin{ex}
Suppose for each triangulation the chain spaces are endowed with an inner product.  Then, Hodge theory provides unique cycle representatives for each homology class.  For spaces satisfying the hypothesis, transgressions relative to these harmonic cycles are not path-independent whenever a non-zero harmonic cycle is assigned.
\end{ex}

Suppose a path-independent transgression of $k$-cycles exists on $\calC$ a system of chain complexes indexed by poset $P$.  How can it be modified?  To preserve path-independence, we can only modify by adding path-independent cochains, that is, 1-cocycles.  However, to preserve the transgression property, these additional cocycles should take values in $(k+1)$-cycles.  Hence, the set of path-independent transgressions is the kernel of the map $\d_Z^1\colon Z^{1,k+1}\to Z^{2,k+1}$.

Now we consider the special case of the subdivision poset for a fixed simplicial complex:

\begin{cor}\label{fixed}
For a fixed triangulation, the poset of its subdivisions admits a path-independent transgression (possibly trivial).  Using the canonical inner product on chain spaces of the subdivisions yields an unique path-independent transgression.
\end{cor}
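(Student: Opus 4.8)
The plan is to exploit the one structural feature that distinguishes this case from Theorem~\ref{path-ind}: the subdivision poset $P$ of a fixed triangulation $K$ has $K$ itself as a \emph{unique minimal element} (every subdivision subdivides $K$, and $K$ is a subdivision of itself via the identity). Consequently Proposition~\ref{acyclic} (Inverted cone) applies to every system extracted from $\calC$, and in particular to the cycle-valued system $\calZ^{i+1}$: writing $X=\order(P)$, we get $H^k(X;\calZ^{i+1})=0$ for all $k>0$. The vanishing in degrees $1$ and $2$ is exactly what drives both halves of the corollary, so I would record it first and then use it twice.

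For existence, I would begin with \emph{any} transgression $\gamma\in C^{1,i+1}$ satisfying $\partial\gamma=\d\zeta$; such a $\gamma$ exists because each $(\d\zeta)(pq)=\zeta(q)-\Phi(p,q)\zeta(p)$ is a boundary and hence admits a $\partial$-lift, chosen edge by edge. Then $\partial(\d\gamma)=\d(\partial\gamma)=\d\d\zeta=0$, so $\d\gamma\in Z^{2,i+1}$ is a cycle-valued $2$-cochain, and it is automatically $\d$-closed. Since $H^2(X;\calZ^{i+1})=0$, I may write $\d\gamma=\d\mu$ for some $\mu\in Z^{1,i+1}$. Replacing $\gamma$ by $\gamma-\mu$ leaves the transgression condition intact, because $\mu$ is cycle-valued and so $\partial\mu=0$ gives $\partial(\gamma-\mu)=\d\zeta$, while $\d(\gamma-\mu)=\d\gamma-\d\mu=0$ makes it path-independent. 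This produces a path-independent transgression, which is the zero cochain precisely when the chosen lifts can be taken trivially, consistent with the ``possibly trivial'' clause.

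For uniqueness under the canonical inner product, recall from the discussion preceding the corollary that any two path-independent transgressions differ by an element of $\ker(\d_Z^1\colon Z^{1,i+1}\to Z^{2,i+1})$, and that $H^1(X;\calZ^{i+1})=0$ forces $\ker\d_Z^1=\im\d_Z^0$. Thus the path-independent transgressions form an affine subspace of $C^{1,i+1}$ parallel to the modification space $\im\d_Z^0$. The canonical inner product on each chain space $C_*(p)$ (simplices orthonormal) induces one on $C^{1,i+1}$, a product of the chain spaces $C_{i+1}(\om(\s))$ over $1$-simplices $\s$, and I would single out the unique representative orthogonal to $\im\d_Z^0$, equivalently the path-independent transgression of minimal norm. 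A Hodge-type splitting $C^{1,i+1}=\im\d_Z^0\oplus(\im\d_Z^0)^{\perp}$ then picks out exactly one element, the asserted canonical transgression.

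The hard part will be the uniqueness half: justifying the orthogonal projection onto $\im\d_Z^0$ when $P$, and hence $X$, may be infinite, so that $C^{1,i+1}$ is an infinite product and $\im\d_Z^0$ need not be closed. I expect to resolve this by performing the decomposition fibrewise, using on each chain space the finite-dimensional Hodge splitting $C_{i+1}(p)=\calH\oplus\im\partial\oplus\im\partial^{*}$ and checking that it is compatible with the order coboundary $\d$; the acyclicity furnished by Proposition~\ref{acyclic} is precisely what removes the harmonic contribution and guarantees that the orthogonal representative is well defined and unique. The existence half, by contrast, is a direct application of the $H^2$-vanishing and should require no delicate analysis.
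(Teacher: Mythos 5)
Your existence argument is correct but follows a different route from the paper's. You take an arbitrary transgression $\gamma$ with $\partial\gamma=\d\zeta$, observe that $\d\gamma$ is a $\d$-closed, cycle-valued $2$-cochain, and use $H^2(X;\calZ^{i+1})=0$ (Proposition \ref{acyclic}, valid because the fixed triangulation is the unique minimal element) to produce a cycle-valued $\mu$ with $\d\gamma=\d\mu$; subtracting $\mu$ preserves $\partial\gamma=\d\zeta$ and makes the result $\d$-closed. The paper instead runs the converse direction of Theorem \ref{path-ind}: any path-independent transgression must be $\d^0\tau$ with $\partial\tau-a\in\ker\d_Z^0$, every element of $\ker\d_Z^0$ is $\Phi_*z$ for an $i$-cycle $z$ on the base triangulation homologous to $a$ there, so $a-\Phi_*z$ is boundary-valued, $\partial$-lifts to some $h\in C^{0,i+1}$, and $\d^0h$ is a path-independent transgression. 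Your version is the cleaner existence proof; the paper's buys an explicit parametrization of \emph{all} path-independent transgressions as $\d^0$ of such lifts, which is what its uniqueness argument actually uses.

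The uniqueness half has a genuine gap. You propose to select the element of the affine space $\gamma_0+\im\d_Z^0$ orthogonal to $\im\d_Z^0$ (equivalently of minimal norm), but the subdivision poset of a fixed triangulation is infinite, so the componentwise inner products on $\prod_{\s}C_{i+1}(\om(\s))$ do not assemble into an inner product on $C^{1,i+1}$ (the relevant sums diverge; e.g.\ the Euler transgression is nonzero on infinitely many edges of the order complex), and orthogonality to $\im\d_Z^0$ is a global condition that a fibrewise Hodge splitting cannot detect: an element of $\im\d_Z^0$ takes the value $\eta(q)-\Phi(p,q)\eta(p)$ on the edge $pq$, which couples the fibers, so $\im\d_Z^0$ is not a product of subspaces of the $C_{i+1}(q)$ and the decomposition $C_{i+1}(q)=\calH\oplus\im\partial\oplus\im\partial^{*}$ does not identify it. The paper's canonical transgression is not characterized by any minimality of the $1$-cochain itself; it is $\d^0h$ where each canonical choice is made in a finite-dimensional space: $z$ is the distinguished (harmonic) representative of the class on the base triangulation, and for each $q$ separately $h(q)$ is the minimal-norm $\partial$-preimage of $a(q)-\Phi(p_0,q)z$ in $C_{i+1}(q)$. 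Uniqueness then follows from the parametrization above, since every path-independent transgression arises as $\d^0$ of such a lift and the inner product fixes both $z$ and the lift. To repair your argument, replace the global orthogonal projection of the transgression by this componentwise canonicalization of the $0$-cochain $\tau$ with $\varphi=\d\tau$.
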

\begin{proof}
We use the notation in Theorem \ref{path-ind}.  First, note that by Proposition \ref{wedge}, the order cohomology of the poset of subdivisions for a fixed triangulation is acyclic.  The proof of Theorem \ref{path-ind} shows that if a path-independent transgression $\varphi$ exists, then it is actually a $\d$-coboundary $\varphi=\d\tau$ and satisfies $\partial\tau-a\in \ker \d_Z^0$ so that this is a sufficient condition.  Now, any kernel element may be realized by choosing a representative $i$-cycle on the base triangulation and extending over the poset to obtain a $0$-cochain $\eta\in \ker \d_Z^0$.  In fact, for any such choice, the cochain $a-\eta$ then $\partial$-lifts, and we can push-down to obtain a path-independent transgression, and every path-independent transgression arises in this manner.  Finally, if we use the canonical inner product, there is a distinguished cycle representative on the base triangulation as well as distinguished $\partial$-lifts.  Thus, we obtain an unique path-independent transgression.
\end{proof}

We can explicitly write the set of path-independent transgression as follows: let $z$ be a $k$-cycle on base (that is, minimal) triangulation $p\in P$ homologous to $a_p$.  Writing $\Phi_*z$ for the induced element of $C^0(X;\calZ_k)$, the cochain $a-\Phi_*z\in C^0(X;\calB_k)$ admits a $\partial$-lift.  Choose the standard inner product on the chain spaces $C_{k+1}(q)$ for $q\in P$.  If we denote by $h_q$ the unique norm-minimizing element of the subspace $\partial_{k+1}^{-1}(a_q-\Phi(p,q)z)$, then the set of $\partial$-lifts is $h+C^0(X;\calZ_{k+1})$.  Finally, the set of path-independent transgressions is
\begin{equation*}
\d^0(h+C^0(X;\calZ_{k+1})) = \d^0 h + \d^0_Z(\calZ^{0,k+1}) = \d^0 h + \ker \d^0_Z
\end{equation*}
where $\ker \d^0_Z$ in the last inequality refers to a subsystem of $\calZ^{1,k+1}$ and follows by acyclicity of posets with unique minimal element.

\begin{remark}
Note that the above arguments are homological, and thus statements about certain systems of chain complexes, namely those whose induced homology maps $\Phi(p,q)\colon H^*(C_*(p))\to H^*(C_*(q))$ are isomorphisms for $p\preceq q$.  In particular, the path-independent transgression results are true for other combinatorial functors from topological spaces to chain complexes, e.g. cubical complexes, for which subdivision is well-defined.
\end{remark}

Finally, the results in this section are also valid on the poset of simplicial isomorphism classes of triangulations if isomorphisms are chosen.  Let $Q=\overline{P}$ be the poset of triangulations modulo simplicial isomorphism, and define a system $\overline{\calC}$ by assigning vector spaces isomorphisms $C_*(T_q)\to V_q$ where $T_q$ is a representative triangulation for $q\in Q$.  Morphisms $\Phi$ of the system $\calC$ define morphisms $\overline{\Phi}$ for $\overline{\calC}$ by stipulating commutativity of the diagram:
\begin{equation}\label{isoclass}
\begin{CD}
C_*(T_q) @>\Phi >> C_*(T_{q'})\\
@V\cong VV @VV\cong V\\
V_q @>\overline{\Phi}>> V_{q'}
\end{CD}
\end{equation}
for $q,q'\in Q$, and one readily checks that the map $\overline{\Phi}$ is well-defined.  Then, we may form an order chain complex in this context and compute order cohomology using representatives for the isomorphism classes and the vertical isomorphisms in Diagram (\ref{isoclass}).  Hence, the results of this section are valid on the poset of triangulations modulo simplicial isomorphism.

\begin{ex}
Consider the poset of triangulations of $S^1$ directed by subdivision.  There are many minimal triangulations, but they are all simplicially isomorphic.  Hence, the associated poset of simplicial isomorphism classes has an unique minimal element and so admits a path-independent transgression.
\end{ex}

\section{Local Formula for a Transgression Relative Euler Cycles}\label{local}

The Chern-Weil theory of characteristic classes in the smooth category provides not only local representatives for characteristic classes, but also a canonical local formula for transgressions relative the difference of such representatives (see Appendix for details).  In this section we exhibit a local formula for a transgression relative Euler cycles.  The notion of locality is that of a \emph{party} (see Definition \ref{party}).

Recall from Example \ref{eulerchain} above that for a simplicial complex $X$, the \emph{Euler cycle} is the $0$-chain
\begin{equation*}
e(X) = \sum_{v\in X} e(v,X)v
\end{equation*}
over vertices $v$, where
\begin{equation*}
e(v,X) = \sum_{k=0} (-1)^k \frac{\text{\# $k$-simplices containing $v$}}{k+1}
\end{equation*}
Since for each $k$-simplex, unity is being equally divided among its $(k+1)$-vertices, we see that for any simplicial complexes $X,Y$ we have $e(X\cup Y)=e(X)+e(Y)-e(X\cap Y)$ which we will denote compactly by $e(X\cup Y)=\IE\{e(X),e(Y)\}$.  More generally, for a collection of simplicial complexes $X_i$, the Euler cycle for the union $X=\cup X_i$ is given by inclusion-exclusion
\begin{equation}
e(X) = e(\cup X_i) = \IE\{e(X_i)\} 
\end{equation}
where $\IE\{e(X_i)\}$ denotes inclusion-exclusion relative the collection $\{X_i\}$.

In fact, since any simplicial complex is the union of its simplices, we see that the Euler cycle is characterized uniquely by the following properties:
\begin{itemize}
\item (Inclusion-Exclusion) $e(X\cup Y)=e(X)+e(Y)-e(X\cap Y)$ for any simplicial complexes $X,Y$
\item (Calibration) $e(\s^{(k)})= (\sum_v v)/(k+1)$ for any $k$-simplex $\s^{(k)}$, where the sum is over vertices
\item (Wellness) $e(\emptyset)=0$
\end{itemize}
Note that if we write $\int \a$ for the sum of the coefficients of any $0$-chain $\a$, then $\int e(Y)= \chi(Y)$ the Euler characteristic of $Y$.

To state our local formula result, we need a notion of locality:
\begin{defn}\label{party}
Let $i\colon N\to M$ be a subdivision of simplicial complexes.  A \emph{$k$-party} of $M$ is a union of $k$-simplices of $M$ which coincides with the image of a $k$-simplex of $N$ under the simplex map $i$.
\end{defn}

\begin{thm}
There is a canonical formula (with rational coefficients) for the transgression relative the Euler cycles.  This formula is local in the sense that the coefficient for any edge in the transgression chain is determined by the union of parties containing that edge.
\end{thm}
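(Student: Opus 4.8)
The plan is to build the transgression by localizing the difference $e(M)-i_*e(N)$ over the parties of the subdivision and solving a canonical bounding problem on each party. First I would exploit the inclusion--exclusion characterization of the Euler cycle. Writing $\{\sigma_\alpha\}$ for the top-dimensional simplices of $N$ and $P_\alpha=i(\sigma_\alpha)$ for the corresponding top-dimensional parties, one has $M=\bigcup_\alpha P_\alpha$ and $N=\bigcup_\alpha\sigma_\alpha$, and since $i$ is a subdivision, $i(\sigma_\alpha\cap\sigma_\beta)=P_\alpha\cap P_\beta$ for every intersection. Because $e(-)$ and $i_*$ are both compatible with inclusion--exclusion (the former by its defining properties, the latter by linearity together with the preceding remark on intersections), I obtain
\begin{equation*}
e(M)-i_*e(N)=\IE\{e(P_\alpha)\}-\IE\{i_*e(\sigma_\alpha)\}=\IE\{e(P_\alpha)-i_*e(\sigma_\alpha)\}.
\end{equation*}
Thus it suffices to assign to each party $P=i(\sigma)$ a local $1$-chain $T(P)$, supported on $P$, with $\partial T(P)=e(P)-i_*e(\sigma)$, and then to set $T:=\IE\{T(P_\alpha)\}$; the boundary will telescope to $e(M)-i_*e(N)$ provided the assignment is natural under passage to faces, i.e. $T(i(\sigma))$ restricts to $T(i(\rho))$ on each face-party $i(\rho)$.

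Next I would construct the local chain $T(P)$ by induction on $n=\dim\sigma$, arranging simultaneously the face-restriction property above. The cases $n=0,1$ are trivial: a subdivided edge has $e(P)=i_*e(\sigma)$ (the interior vertices of a subdivided $1$-simplex receive Euler weight $1-2/2=0$), so $T(P)=0$ in dimensions $\le 1$. For the inductive step I set $T(P)=T_\partial(P)+c(P)$, where $T_\partial(P)=\IE\{T(i(\rho)):\rho\subsetneq\sigma\}$ assembles the already-constructed face transgressions by inclusion--exclusion and $c(P)$ is a correction I take to be supported in the interior of the contractible party $P$. Granting that the discrepancy $\big(e(P)-i_*e(\sigma)\big)-\partial T_\partial(P)$ is an interior-supported $0$-chain whose coefficients sum to $\chi(P)-\chi(\sigma)=0$, it is a boundary within $P$, and I take $c(P)$ to be its norm-minimizing $\partial$-lift, in the spirit of Corollary \ref{fixed}. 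Because this minimization is carried out inside the single party $P$, the resulting $T(P)$ is an explicit rational chain determined entirely by the combinatorics of the subdivided simplex $P$, and since $c(P)$ avoids $\partial P$ the face-restriction property is preserved, closing the induction.

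Granting the construction, rationality is immediate: inclusion--exclusion and norm-minimization over $\Q$ both preserve rational coefficients. Locality is equally immediate from the party-local nature of $T(P)$. A given edge of $M$ enters $T$ only through those $T(P_\alpha)$ whose top-dimensional party $P_\alpha$ contains it, and (via the inclusion--exclusion corrections) through sub-parties $P_\alpha\cap P_\beta\cap\cdots$ containing it, which are themselves intersections of the top-dimensional parties meeting the edge. Hence the coefficient of any edge in $T$ is a function solely of the union of parties containing that edge, which is the asserted local formula.

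The step I expect to be the main obstacle is establishing the naturality needed for the global inclusion--exclusion to telescope, namely $T(i(\sigma))|_{i(\rho)}=T(i(\rho))$, together with the claim in the inductive step that the correction $c(P)$ can be chosen in the interior of $P$. The subtlety is that restriction does not commute with the Euler cycle: the weight $e(w,P)$ of a boundary vertex $w$ counts simplices of $P$ running into the interior, so $e(P)$ does \emph{not} restrict to $e(i(\rho))$ on a face-party. Consequently it is not automatic that the discrepancy $\big(e(P)-i_*e(\sigma)\big)-\partial T_\partial(P)$ is supported in the open interior of $P$; controlling its boundary component is exactly what forces the canonical interior choice to be compatible across faces. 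I would resolve this with a separate lemma asserting that the boundary-supported part of the discrepancy is precisely cancelled by $\partial T_\partial(P)$ --- an identity relating the Euler weights contributed by interior versus boundary simplices of $P$, provable by a second inclusion--exclusion over the faces of $\sigma$ and the inductive hypothesis $\partial T(i(\rho))=e(i(\rho))-i_*e(\rho)$. Once this cancellation is in hand, the residual interior discrepancy can be bounded within the interior of $P$ without disturbing any face restriction, and both the gluing and the locality claim follow.
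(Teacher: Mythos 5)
Your opening move is exactly the paper's: decompose $N$ into its maximal simplices, invoke the inclusion--exclusion characterization of $e$, and reduce $e(M)-i_*e(N)$ to a signed sum of party-local differences $e(P)-i_*e(\sigma)$, each with vanishing coefficient sum and hence a boundary within the connected party $P$. Where you diverge is in insisting that the local lifts $T(P)$ be compatible under restriction to face-parties so that ``the boundary will telescope.'' No such compatibility is needed: $\partial$ is linear, so if $T(Q)$ is \emph{any} lift of $e(Q)-i_*e(\tau)$ on each party $Q=i(\tau)$ appearing in the inclusion--exclusion, chosen independently party by party, then
\begin{equation*}
\partial\Bigl(\sum_{j,k}(-1)^j T(Q_k^j)\Bigr)=\sum_{j,k}(-1)^j\bigl(e(Q_k^j)-i_*e(\calN_k^j)\bigr)=e(M)-i_*e(N)
\end{equation*}
by the inclusion--exclusion identity for $e$ alone. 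This is precisely what the paper does: it lifts each term separately and canonically (averaging the spanning-tree lifts over the vertex set of each party), and locality is then immediate because an edge only enters through the lifts attached to parties containing it. Your naturality requirement is a self-imposed obstacle.

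It is also an obstacle your argument does not clear. The lemma you defer to --- that the boundary-supported part of $\bigl(e(P)-i_*e(\sigma)\bigr)-\partial T_\partial(P)$ is precisely cancelled by $\partial T_\partial(P)$, leaving an interior-supported $0$-chain --- is false. Take $\sigma$ the $2$-simplex $abc$ and let $i$ subdivide it by placing a vertex $w$ on the edge $ab$ and coning to $c$. By your base case every proper face has trivial transgression, so $T_\partial(P)=0$, yet
\begin{equation*}
e(P)-i_*e(\sigma)=\tfrac{1}{6}\,w-\tfrac{1}{6}\,c,
\end{equation*}
which is supported entirely on vertices of $\partial P$. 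The residual discrepancy therefore cannot be bounded by a chain supported in the open interior of $P$, and the inductive step as stated does not close. (In this instance one can still lift by $\tfrac{1}{6}$ times the interior edge $cw$, but you give no argument that the interior $1$-skeleton always connects the support of the discrepancy --- and no such argument is needed once the face-restriction condition is dropped.) The repair is to delete the induction and the naturality requirement entirely and argue as the paper does.
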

\begin{proof}
Let $i\colon Y\to X$ be a subdivision.  Write $Y=\cup P_i$ where $P_i$ are the maximal faces of $Y$ (that is, the maximal elements of the face poset of $Y$), and let $\{\calN_k^j\}$ denote the faces that are $(j+1)$-fold intersections of elements from the collection $\{P_i\}$.  For example, $\{\calN_k^0\}=\{P_i\}$.  Then,
\begin{equation*}
e(Y)= \IE\{e(P_i)\} = \sum_j (-1)^j \sum_k e(\calN_k^j)
\end{equation*}
Now, $i_*\calN_k^j$ is a party of $X$, say denoted $Q_k^j$.  Then,
\begin{equation*}
e(X) = e(\cup i_*P_i) = \sum_j (-1)^j \sum_k e(Q_k^j)
\end{equation*}
so that
\begin{equation}\label{difference}
e(X)-i_*e(Y) = \sum_{j,k} (-1)^j \left( e(Q_k^j) - i_*e(\calN_k^j) \right)
\end{equation}
But $\int e(Q_k^j)=1=\int i_*e(\calN_k^j)$, so that the difference terms within parentheses in Equation (\ref{difference}) are boundaries.  Hence, we can $\partial$-lift locally.  This can be effected by a choice of spanning tree for the vertex set of the party $Q_k^j$.  Averaging over all such choices yields a canonical choice.  The coefficient of any edge in $X$ is thus determined by the union parties containing that edge.
\end{proof}

\begin{remark}
If $X$ is a pure simplicial complex (for example, a combinatorial manifold), then $j$ in the preceding proof may be interpreted as codimension.  Also, note that the proof only required that (1) the Euler cycles $e$ satisfy inclusion-exclusion, and (2) $e$ assigns homologous chains to a simplex and any subdivision of it.
\end{remark}

\begin{remark}
If we use the canonical inner product on the chain spaces, then we obtain unique $\partial$-lifts so that there is a canonical locally-determined transgression without averaging.
\end{remark}

\begin{remark}
The difference of two arbitrary representatives of a given homology class may not possess $\partial$-lifts in any local sense.  Consider the real line $\R$ triangulated with vertex set $\Z$.  Let $v_i$ denote the vertex at integer $i$.  The $0$-chain $v_i-v_{-i}$ \emph{cannot} in general possess a local $\partial$-lift because the integer $i$ may be chosen arbitrarily large so that any local $\partial$-lift must assign some of the intervening edges a coefficient of zero.
\end{remark}

\begin{remark}
An analogous result holds true for Euler cycles relative cubical complexes by amending the definition for Euler cycles to divide unity among the vertices of cubes of various dimensions.
\end{remark}

\section{Relations between Order Cohomology and Sheaf Theory}\label{sheaf}

We conclude with a sheaf-theoretic interpretation of order cohomology over sufficiently nice posets such as lattices.  (For the use of sheaves over (finite) posets, see Baclawski\cite{baclawski}.)  We will show that for a fixed poset, each system has an associated sheaf relative the upper-ideal topology and vice-versa.  However, these associations are not inverses due to the carseness of the upper-ideal topology (see Example \ref{notinverse}).  In the case of lattices, order cohomology coincides with sheaf cohomology with respect to the system's associated sheaf (see Proposition \ref{coincide}). 

Let $P$ be a poset and $\calG$ a system of abelian groups ordered by $P$.  The upper ideals of $P$ generate a topology on $P$ and hence also on $X=\order(P)$ the order complex.  We define a presheaf on $X$ as follows: Associate to each open set $U$, the cochain group $C^*(U;\calG)$ which we will denote by $\calG(U)$, and for $V\subset U$ open sets, define the restriction morphism on sections $\calG(U)\to \calG(V)$ to be just restriction of cochains.  Next we check that it satisfies the axioms for a sheaf: We will write $\uparrow x=\{y\in P| x\preceq y\}$ for the upper-ideal generated by $x\in P$.
\begin{itemize}
\item \emph{Monopresheaf axiom}: Note that for any open set $V$ in the upper-ideal topology, if $x\in V$, then $\uparrow x\subset V$.  Let $U$ be an open set with cover $\calU$.  By the preceding remark, every simplex subordinate to $U$ is subordinate to some cover element of $\calU$.  Thus, any cochain vanishing on all cover elements vanishes globally.
\item \emph{Gluing axiom}: Let $U$ be an open set with cover $\calU$.  Suppose we have local sections $\{s_V\}_{V\in \calU}$ compatible on overlap.  Then, we may define a global section on $U$ using extension by zero on simplices not subordinate to the cover $\calU$.  Note that if we cover with basis elements, then every simplex is subordinate to some cover element, and extension is unnecessary.
\end{itemize}
Thus, $\calG$ is a sheaf of abelian groups.  Since for any $z\in P$, the smallest open set containing $z$ is the upper ideal generated by $z$, the stalk over $z$ is precisely the group $\calG(\uparrow z)$.  Conversely, given a sheaf $\calF$ of abelian groups, we have a naturally associated system $\calG$ of abelian groups given by $\calG_z=\calF_z=\calF(\uparrow z)$ with morphisms $\calG_x\to \calG_y$ being the restriction map on sections of $\calF$, namely $\calF(\uparrow x)\to \calF(\uparrow y)$.

We summarize the preceding associations: Given a system of abelian groups $\calG=\{\calG_x,\Phi(x,y)\}$ directed by poset $P$, the associated sheaf is defined by $\calF(U)=C^*(U;\calG)$ for any open set $U$ under the upper ideal topology on $P$, with restriction maps given by restriction of $\calG$-valued cochains.  Conversely, given a sheaf $\calF$ of abelian groups over poset $P$ under the upper ideal topology, the associated system is given by $\calG_x=\calF(\uparrow x)$ with morphisms $\Phi(x,y)\colon \calG_x\to \calG_y$ given by restriction for $x\preceq y$.

These associations are not inverses of each other as evidenced by the following example:
\begin{ex}\label{notinverse}
Let $P=\{a,b\preceq c\}$ a poset of three elements.  Suppose $\calG$ is a system of abelian groups directed by $P$.  The associated sheaf is given by
\begin{align*}
\calF(\uparrow x) &= C^*(\uparrow x;\calG)\cong (\calG_x\oplus\calG_c)\oplus \calG_c\quad\quad x=a,b\\
\calF(\uparrow c) &= C^*(\uparrow c;\calG)\cong (\calG_c)\oplus 0
\end{align*}
with restriction maps $\calF(\uparrow a), \calF(\uparrow b)\to \calF(\uparrow c)$ both given by $\pi_2\oplus 0$ where $\pi_2$ is restriction onto the second summand of $C^0$.  Now, the system $\calG'$ associated with sheaf $\calF$ is given by
\begin{align*}
\calG'_x &=\calF(\uparrow x)\quad x\in P\\
\Phi'(x,c) &=\pi_2\oplus 0\quad x=a,b
\end{align*}
In particular, note that $\calG_x\hookrightarrow \calG'_x\cong (\calG_x\oplus\calG_c)\oplus\calG_c$ for $x=a,b$.

Conversely, suppose we start with a sheaf of abelian groups $\calF$ over $P$ under the upper-ideal topology.  The associated system $\calG$ is given by $\calG_x=\calF_x=\calF(\uparrow x)$ for $x\in P$ and $\Phi(x,y)$ is the restriction map $\calF(\uparrow x)\to \calF(\uparrow y)$ for $x\preceq y$.  Now, the sheaf $\calF'$ associated with this system $\calG$ is defined by
\begin{align*}
\calF'(\uparrow x) &= C^*(\uparrow x;\calG)=(\calF_x\oplus\calF_c)\oplus \calF_c\quad\quad x=a,b\\
\calF'(\uparrow c) &= C^*(\uparrow c;\calG)=\calF_c
\end{align*}
with restriction maps $\calF'(\uparrow x)\to \calF(\uparrow c)$ for $x=a,b$ also given by $\pi_2\oplus 0$.

The failure in both cases is due to the coarseness of the ideal toplogy.  Passage from system to sheaf packages data on an entire upper ideal so that the individual groups $\calG_x$ are no longer directly accessible.
\end{ex}

Let $\calF$ be a sheaf over topological space $X$.  Recall that $\calF$ is \emph{flasque} provided for any open set $U$ the restriction morphism is surjective, and if $\calF$ is flasque and $\calU$ an open cover of $X$, the associated C\v{e}ch cohomology $\tilde{H}^*(\calU;\calF)$ with sheaf coefficients is actually acyclic.  In our case, fixing any open cover (relative the upper-ideal topology) of $P$ and using the zero extension over simplices not subordinate to the covering, we see that for any direct system, the associated sheaf is flasque.  Thus, if we form a C\v{e}ch-order double complex analogous to the C\v{e}ch-deRham complex, then the order cohomology always coincides with the diagonal cohomology of the double complex.  The existence of a good cover relative order cohomolgy would then yield the equality of order and sheaf cohomologies.
\begin{prop}\label{coincide}
Let $\calG$ be a system of abelian groups directed by $P$ and $\calF$ its associated sheaf.  Suppose unique upper bounds exist in $P$.  Then, order and sheaf cohomologies coincide, $H(X;\calG)\cong \tilde{H}^*(X;\calF)$.  In particular, this is true for lattices.
\end{prop}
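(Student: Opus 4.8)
The plan is to carry out the C\v{e}ch--order double complex argument sketched just before the statement, with the lattice hypothesis supplying a good cover. First I would fix the cover $\calU=\{U_z\}_{z\in P}$ of $X=\order(P)$ by the order complexes $U_z=\order(\uparrow z)$ of the principal upper ideals. These are the basis elements of the upper-ideal topology, and since every chain $w_0\prec\cdots\prec w_k$ lies in $U_{w_0}$, they cover $X$. On this cover I would form the double complex $K^{p,q}=\check C^p(\calU;\calF^q)$, where $\calF^q$ is the flasque sheaf $U\mapsto C^q(U;\calG)$ of order $q$-cochains, the horizontal differential is the C\v{e}ch coboundary, and the vertical differential is the order coboundary $\d$. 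The global-section complex of $\calF^\bullet$ is precisely $(C^*(X;\calG),\d)$, so the associated sheaf $\calF$ (with stalks $\calG_z=\calF(\uparrow z)$) sits at the bottom of the order-cochain resolution $0\to\calF\to\calF^0\to\calF^1\to\cdots$.

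Next I would run the two filtrations as spectral sequences. Filtering so that the C\v{e}ch differential is taken first, flasqueness of each $\calF^q$ (established just above the statement: flasque sheaves are C\v{e}ch-acyclic for every cover) collapses the rows onto $p=0$ with value $\Gamma(X;\calF^q)=C^q(X;\calG)$; taking $\d$-cohomology then yields the order cohomology $H^*(X;\calG)$ as the total cohomology. This identification holds for any poset and is exactly the assertion that order cohomology is the diagonal cohomology of the double complex.

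The other filtration is where the hypothesis enters. Taking the order differential first, the term $E_1^{p,q}=\prod_{z_0<\cdots<z_p}H^q(U_{z_0}\cap\cdots\cap U_{z_p};\calG)$ appears. The crucial identity is $U_{z_0}\cap\cdots\cap U_{z_p}=\order(\uparrow z_0\cap\cdots\cap\uparrow z_p)$, and $\uparrow z_0\cap\cdots\cap\uparrow z_p$ is the set of common upper bounds of the $z_i$. Under the unique-upper-bound hypothesis, and in particular for a lattice where finite joins always exist, this set is either empty or the principal upper ideal $\uparrow(z_0\vee\cdots\vee z_p)$, hence has a unique minimal element. By the Inverted Cone Proposition \ref{acyclic} each such intersection is order-acyclic, so the vertical $E_1$ collapses onto $q=0$ with value $\check C^p(\calU;\calF)$, where $\calF$ is the associated sheaf with $\calF(U)=H^0(U;\calG)$ and stalks $\calG_z$. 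Taking C\v{e}ch cohomology then gives $\check H^*(\calU;\calF)$ as the total cohomology. Since all intersections are order-acyclic, $\calU$ is a good cover for $\calF$, so $\check H^*(\calU;\calF)\cong\tilde H^*(X;\calF)$, and comparing the two computations of the total cohomology yields $H^*(X;\calG)\cong\tilde H^*(X;\calF)$. Lattices satisfy the hypothesis since any finite subset admits a join, giving the final clause.

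The step I expect to be the main obstacle is the good-cover verification: one must correctly identify the intersection of the order complexes of principal ideals with the order complex of the intersected ideals, invoke the lattice property to recognize that intersection as a \emph{principal} upper ideal so that Proposition \ref{acyclic} applies, and confirm that the sheaf emerging from the vertical collapse is genuinely the associated sheaf $\calF$ (stalks $\calG_z$) rather than the auxiliary flasque resolution $\calF^\bullet$. A secondary technical point is the convergence of both first-quadrant spectral sequences, which is routine once each $E_1$ page is concentrated on a single line.
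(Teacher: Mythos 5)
Your proposal follows essentially the same route as the paper: cover $X$ by the order complexes of the principal upper ideals, observe that unique upper bounds make every nonempty intersection a principal upper ideal and hence order-acyclic by Proposition \ref{acyclic}, and collapse the C\v{e}ch--order double complex in both directions (flasqueness in one, the good cover in the other), with the finest-refinement remark identifying the C\v{e}ch cohomology with sheaf cohomology. The paper's proof is only a few lines and leaves the double-complex bookkeeping implicit, so your spelled-out spectral-sequence version---including the identification of the sheaf appearing after the vertical collapse as the one with stalks $\calG_z$---is a faithful expansion of the same argument.
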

\begin{proof}
Consider the cover $\calU$ of $X$ by basis elements $\{\uparrow z|z\in P\}$.  Since unique upper bounds exist, any intersection is principally generated, hence acyclic (see Proposition \ref{acyclic}).  Thus, $\calU$ is a good cover, and the order cohomology coincides with the C\v{e}ch cohomology relative $\calU$.  However, $\calU$ is the finest refinement over $X$ so that order cohomology indeed coincides with sheaf cohomology.
\end{proof}

\begin{remark}
Good covers relative order cohomology need not exist for a general direct system.  Let $P=\{a,b\preceq c,d\preceq e\}$ be the underlying poset.  Any proper open set containing either $a$ or $b$ but not both will contain the subposet $c,d\preceq e$.  Suppose for example, that the open set contains $a$ but not $b$.  Setting $V_a=V_c=V_d$ and $V_e\neq 0$, we see as in Example \ref{wedge} that there is no good cover relative order cohomology.
\end{remark}

The utility of this sheaf-theoretic formulation of order cohomology is still unclear to the author.

\section{Appendix: Differential-Geometric Motivation}\label{diffgeo}

This appendix provides a brief overview of the Chern-Weil theory of characteristic classes that motivated the questions on path-independence and local formulae for transgressions.  In particular, we exhibit the local canonical transgression associated to two connections, and we use the Cartan Homotopy Formula to show that another natural transgression (namely, the difference of Chern-Simons forms) is path-independent.

Let $M$ be a smooth manifold and $\xi=(\pi,E,M)$ be a smooth $n$-bundle.  Let $\nabla$ be a connection on $\xi$ and $R_{\nabla}$ the associated curvature operator, which locally can be considered as an $\End(\xi)$-valued $2$-form $\Om_{\nabla}$.  Let $I^*(GL_n)$ be the set of invariant polynomials on the general linear group $GL_n$.  Thus, elements of $I^*(GL_n)$ are polynomial functions on the entries of $n\times n$-square matrices satisfying $P(X)=P(AXA^{-1})$ for all invertible $A$.

The central result of Chern-Weil theory is that given a degree $k$ invariant polynomial $P\in I^k(GL_n)$, the local form $P(\Om_{\nabla})\in \calA^{2k}(M)$ is closed, and the associated cohomology class is a characteristic class independent of the connection chosen.  Because the curvature forms transform via conjugation under change of frame field, the invariant polynomial guarantees that the \emph{local expressions} $P(\Om_{\nabla})$ represent the same form on the overlap of trivializations, and hence define a global form.

If $\xi$ is an oriented, real vector bundle of even rank, say $n=2k$, then endowing $\xi$ with a Riemannian metric, the structure group reduces to $SO(n)$, whence the Pfaffian becomes an invariant polynomial.  Choosing a metric-compatible connection, the curvature forms $\Om_{\nabla}$ are skew-symmetric, and the cohomology class $\Pf(\Om/2\pi) \in H^n(M)$ corresponds to the integral \emph{Euler class} of the bundle.

One approach to showing that the characteristic classes of Chern-Weil theory are independent of the choice of connection is to pick two connection forms $\om_0,\om_1$ and consider a $1$-parameter family of connections $\om_t$ between them with associated deformation $\Om_t$ of curvature forms.  One can show that for any invariant polynomial $P$, the form
\begin{equation}
P(\Om_1)-P(\Om_0) = \int_0^1 \frac{d}{dt}P(\Om_t) dt = d\varphi(\om_t)
\end{equation}
is exact, where the form
\begin{equation}\label{transgression}
\varphi(\om_t)= k \int_0^1 P(\om_t'\wedge \Om_t^{k-1})dt
\end{equation}
is called a \emph{transgression} relative the connections representated by $\om_0,\om_1$.  Because $\om_t'$ also transforms by conjugation, Equation (\ref{transgression}) actually defines a form globally.  Though transgressive forms are determined up to closed forms, because the space of connections is an affine space, there is a natural ``straight-line'' deformation between any two connections, and Equation (\ref{transgression}) actually gives a local formula for a canonical choice of transgression.

For the Euler class, we consider an analogous deformation of Riemannian metrics, and hence associated Levi-Civita connections.  Again, we may obtain a local formula for a canonical choice of transgression, now between Euler forms.

We now use the \emph{Cartan Homotopy Formula} to show that a path-independent transgression is always possible. Consider a polynomial algebra over variables $x,y$ of degrees $1,2$ respectively, and suppose both variables are parameterized by another variable $t$.  We introduce anti-derivations $d,l_t$ subject to the following:
\begin{alignat}{2}
dx &= y\pm x^2  \qquad l_t x &= 0 \\
dy &= \pm[x,y] \qquad l_t y &=\partial x/\partial t
\end{alignat}
The variables $x,y$ mimic the connection and curvature forms, respectively, and the $d$-relations above thus express Cartan's second structural equation and the Bianchi Identity.  The choice of sign depends on whether the structure group acts on the left or right.  One checks that
\begin{equation}\label{babyhomotopy}
l_t d + dl_t = \frac{\partial}{\partial t}
\end{equation}
This construction extracts the algebra involved in applying the Chern-Weil theory to a one-parameter deformation of the connection.  For any polynomial $f$ in the algebra, we define
\begin{equation}
Hf = \int_0^1 l_t f \medspace dt
\end{equation}
and write $\Delta f(x_t,y_t)=f(x_1,y_1)-f(x_0,y_0)$ for the (directed) difference of the endpoints of the deformation.  Then, by integrating Equation (\ref{babyhomotopy}), we see that $H$ defines a homotopy operator via the \emph{Cartan Homotopy Formula}:
\begin{equation}
\Delta f= \int_0^1 \frac{\partial f}{\partial t}\medspace dt = (Hd +dH)f
\end{equation}
If $P$ is an invariant polynomial of degree $k$, then
\begin{equation*}
l_tP(y^k) = kP(l_t y\cdot y^{k-1}) = kP\left(\frac{\partial x}{\partial t} y^{k-1}\right)
\end{equation*}
so that
\begin{align*}
P(y_1^k)-P(y_0^k) &= \Delta P(y_t^k) = (dH+Hd)P(y_t^k) = dHP(y_t^k)\\
&= d\int_0^1 k P\left(\frac{\partial x}{\partial t} y^{k-1}\right)\medspace dt
\end{align*}
which recovers the transgression in Equation (\ref{transgression}).

For any connection, define the associated \emph{Chern-Simons form}\cite{chern2} as the transgression using Equation (\ref{transgression}) between $0$ and the given connection:
\begin{equation*}
TP(\om) = \varphi(0\om) = k \int_0^1 P(\om \wedge \Phi_t^{k-1})\medspace dt
\end{equation*}
where $\Phi_t = t\Om + (1/2)t(t-1)[\om,\om]$.  It satisfies
\begin{equation}
dTP(\om) = P(\Om)
\end{equation}
Now consider a deformation of Chern-Simons forms:
\begin{equation}
TP(\om_t) = \varphi(0\om_t) = k \int_0^1 P(\om_t \wedge \Phi_{s,t}^{k-1})\medspace ds
\end{equation}
where now $\Phi_{s,t} = s\Om_t + (1/2)s(s-1)[\om_t,\om_t]$.  Then,
\begin{equation}
dTP(\om_t) = P(\Om_t)
\end{equation}
and by the homotopy formula,
\begin{equation}
\Delta TP(\om_t) = (Hd+dH)TP(\om_t) = \varphi(\om_t) + dH(TP(\om_t))
\end{equation}
Note that $\Delta TP(\om_t)$ also defines a transgression in addition to $\varphi(\om_t)$. Finally, writing $Q(\om_a\om_b)=\Delta TP(\om_t)$ for the difference of Chern-Simons associated to the deformation $\om_t$ from $\om_a$ to $\om_b$, we find that 
\begin{equation*}
dQ(\om_a\om_b) = d\Delta TP(\om_t) = d(TP(\om_b)-TP(\om_a)) = P(\Om_b)-P(\Om_a) = (\d P)(\om_a\om_b)
\end{equation*}
and
\begin{align*}
(\d Q)(\om_a\om_b\om_c) &= Q(\om_a\om_b)+Q(\om_b\om_c)-Q(\om_a\om_c)\\
&= (TP(\om_b)-TP(\om_a))+(TP(\om_c)-TP(\om_b))-(TP(\om_c)-TP(\om_a))\\
&= 0
\end{align*}
where $\d$ is the C\v{e}ch-like coboundary operator in order cohomology.  Hence, $Q=\Delta TP$ the difference of Chern-Simons forms defines a path-independent transgression.

\bibliographystyle{plain}
\bibliography{ordercohomology}

\end{document}